\documentclass[amstex,12pt]{article}
\usepackage [latin1]{inputenc}
\usepackage{indentfirst}
\usepackage{amsmath}
\usepackage{amssymb,amsmath,amsthm}
\hyphenation{con-firm} \textheight22cm
\setlength{\textwidth}{16.7cm} \addtolength{\topmargin}{-1.9cm}
\addtolength{\oddsidemargin}{-1.6cm}
\addtolength{\evensidemargin}{0cm}
\addtolength{\evensidemargin}{0cm} \setcounter{enumi}{1}
\newtheorem{theorem}{Theorem}[section]

\newtheorem{lemma}{Lemma}[section]
\newtheorem{definition}{Definition}[section]
\newtheorem{remark}{Remark}[section]

\newtheorem{example}{Example}[section]

\newcommand{\beq}{\begin{equation}}
\newcommand{\eeq}{\end{equation}}
\newcommand{\beqn}{\begin{eqnarray}}
\newcommand{\eeqn}{\end{eqnarray}}

\baselineskip 20pt
\allowdisplaybreaks

\begin{document}
\allowdisplaybreaks

\title{Besicovitch almost periodic solutions of abstract semi-linear differential equations with  delay}
\author {Yongkun Li$^a$\thanks{The corresponding author. Email: yklie@ynu.edu.cn.}, Mei Huang$^a$  and Bing Li$^b$\\
$^a$Department of Mathematics, Yunnan University\\
Kunming, Yunnan 650091, China\\
 $^b$School of Mathematics and Computer Science, Yunnan Minzu University\\
Kunming, Yunnan 650500,  China}
\date{}
\maketitle{}

\begin{abstract}
In this paper, we first use the Bohr property to give a definition of  Besicovitch almost periodic functions, and study some basic properties of Besicovitch almost periodic functions, including the equivalence of the Bohr property and the Bochner property. Then, as an application, we use the contraction principal to obtain the existence and uniqueness of Besicovitch almost periodic solutions for a class of abstract semi-linear differential equations with delay.
\end{abstract}
{\bf Key words:} Besicovitch almost periodic function; Semi-linear differential equation; Time delay.\\
{\bf 2020 MSC:}  43A60; 47B37; 47D06.

\section{Introduction}
\setcounter{equation}{0}

As we all know, the almost periodic concept was introduced into mathematics by the Danish mathematician Harald Bohr [1-3]. Bohr almost periodic theory soon attracted the attention of many famous mathematicians at that time, such as W. Stepanov, H. Weyl, N. Wiener, S. Bochner and so on. These mathematicians proposed many important generalizations and variants of Bohr almost periodic concept. We have seen that the generalization of almost periodic functions can be considered from two different perspectives, one is the further structural extension of pure periodicity, and the other is the convergence of trigonometric polynomial sequences in a more general sense than uniform convergence. The second direction of generalization was adopted by A.S. Besicovitch  [4-12].

On the one hand, the concept of almost periodic functions in Besicovich's sense is a natural generalization of the concept of almost periodic functions in Bohr's sense. In the sense of Besicovitch, the space of almost periodic functions  is the completion of  trigonometric polynomials of the form
$$a_{1}e^{i\lambda_{1}t}+a_{2}e^{i\lambda_{2}t}+\ldots+a_{n}e^{i\lambda_{n}t}$$
with respect to the seminorm
$$\|f\|_M=\left(\limsup\limits_{T\rightarrow +\infty}\frac{1}{2T}\int_{-T}^{T}\|f(t)\|_{\mathbb{X}}^{\flat}dt\right)^{\frac{1}{\flat}},$$
where $1\leq\flat<\infty$.
The Besicovitch almost periodic functions  defined in this way (denote by $B^\flat$ the collection of all such functions) have both Bohr and Bochner properties.
 C. Corduneanu and A.S. Brsicovitch presented some basic properties of almost periodic functions in the sense of Besicovitch in \cite{2,4}.  Besicovitch almost periodic functions can also be defined in Marcinkiewicz space using Bohr property or Bochner property. But the space composed of the Besicovitch almost periodic functions  defined by Bohr property or Bochner property is only the proper subspace of  $B^\flat$.
 However, the Besicovitch almost periodic functions defined in either way are not  concrete functions, but   equivalent classes of functions. This makes it difficult to study Besicovitch almost periodic solutions for differential equations. At present, the results of Besicovitch almost periodic solutions of differential equations are few.

 On the other hand, the existence of almost periodic solutions in various senses of differential equations is an important content in the qualitative theoretical research and application research of differential equations, see Refs. [13-22] and the references therein.   But the results of the existence of Besicovitch almost periodic solutions of semi-linear differential equations are still very rare.

 Motivated by the above discussion, in this paper, we mainly study some basic properties of Besicovitch almost periodic functions considering the first direction of generalization, that is to say, we use the Bohr property to give a definition of Besicovitch almost periodic functions, and to study some basic properties of this type of Besicovitch almost periodic functions. Based on these properties and the Banach fixed point theorem to study the existence and uniqueness of Besicovitch almost periodic solutions for   abstract semi-linear differential equations with delay.

\section{Preliminaries}
\setcounter{equation}{0}

In this section, we recall some basic definitions and lemmas of Bohr almost periodic functions, measurable and essentially bounded functions, which are used throughout this paper.\par
Let $(\mathbb{X},\|\cdot\|_{\mathbb{X}})$ be a Banach space, $C(\mathbb{R},\mathbb{X})$ be the set of all continuous functions from $\mathbb{R}$ to $\mathbb{X}$ and $BC(\mathbb{R},\mathbb{X})$ be the space of all bounded continuous functions from $\mathbb{R}$ to $\mathbb{X}$.

For $1\leq\flat<\infty$, let $L_{loc}^{\flat}(\mathbb{R},\mathbb{X})$ be the space of all measurable functions from $\mathbb{R}$ into $\mathbb{X}$ that are  $\flat$-th power locally integrable and $M_{\flat}(\mathbb{R},\mathbb{X})$ be the collection of all $f\in L_{loc}^{\flat}(\mathbb{R},\mathbb{X})$ satisfying $$\|f\|_{M_{\flat}}:=\left(\limsup\limits_{T\rightarrow +\infty}\frac{1}{2T}\int_{-T}^{T}\|f(t)\|_{\mathbb{X}}^{\flat}dt\right)^{\frac{1}{\flat}}<+\infty.$$

We denote by $L^{\infty}(\mathbb{R},\mathbb{X})$   the set of all functions $f:\mathbb{R}\rightarrow \mathbb{X}$ that are measurable and essentially bounded.
 The space $L^{\infty}(\mathbb{R},\mathbb{X})$ is a Banach space with the norm
$$\|f\|_{\infty}:=\inf\left\{D\geq 0:\|f(t)\|_{\mathbb{X}}\leq D~a.e.~t\in\mathbb{R}\right\}.$$

\begin{definition}(\cite{4}) Let $f:\mathbb{R}\rightarrow \mathbb{X}$ be a continuous function, then $f$ is called (Bohr) almost periodic if for each $\varepsilon>0$, there exists $l=l(\varepsilon)>0$ such that in every interval of length $l$ of $\mathbb{R}$, one can find a number $\tau\in(a,a+l)$ with the property (Bohr property)
\begin{align}\label{2.1}
\sup\limits_{t\in \mathbb{R}}\|f(t+\tau)-f(t)\|_{\mathbb{X}}<\varepsilon.
\end{align}The collection of such Bohr almost periodic functions will be denoted by $AP(\mathbb{R},\mathbb{X})$.
\end{definition}

\begin{lemma}(\cite{5}) The space $AP(\mathbb{R},\mathbb{X})$ is a Banach space  when it is endowed with the norm $\|f\|_{AP}=\sup\limits_{t\in \mathbb{R}}\|f(t)\|_{\mathbb{X}}$ for $f\in AP(\mathbb{R},\mathbb{X})$. And if $f\in AP(\mathbb{R},\mathbb{X})$, then the function $f$ is bounded and uniformly continuous on $\mathbb{R}$ in the norm $\|\cdot\|_{AP}$.
\end{lemma}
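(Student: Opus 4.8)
The plan is to prove the statement in two stages: first the boundedness and uniform continuity of a single $f\in AP(\mathbb{R},\mathbb{X})$, and then the Banach space structure, whose only nontrivial ingredient is completeness.

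For \textbf{boundedness}, I would apply the Bohr property with $\varepsilon=1$ to obtain a length $l=l(1)>0$ such that every interval of length $l$ contains a $1$-almost period. Since $f$ is continuous, it is bounded on the compact interval $[0,l]$, say $\|f(t)\|_{\mathbb{X}}\le M$ there. Given an arbitrary $t\in\mathbb{R}$, I would select a $1$-almost period $\tau$ in the interval $(-t,-t+l)$, so that $t+\tau\in(0,l)$; then $\|f(t)\|_{\mathbb{X}}\le\|f(t)-f(t+\tau)\|_{\mathbb{X}}+\|f(t+\tau)\|_{\mathbb{X}}<1+M$, which is a uniform bound and in particular makes $\|f\|_{AP}$ finite. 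For \textbf{uniform continuity}, given $\varepsilon>0$ I would fix $l=l(\varepsilon/3)$ and use that $f$ is uniformly continuous on the compact set $[-1,l+1]$ to obtain $\delta\in(0,1)$ realizing the modulus $\varepsilon/3$ there. For any $t_1,t_2$ with $|t_1-t_2|<\delta$, choosing an $(\varepsilon/3)$-almost period $\tau\in(-t_1,-t_1+l)$ places both $t_1+\tau\in(0,l)$ and $t_2+\tau\in(-1,l+1)$, and the three-term split
$$\|f(t_1)-f(t_2)\|_{\mathbb{X}}\le\|f(t_1)-f(t_1+\tau)\|_{\mathbb{X}}+\|f(t_1+\tau)-f(t_2+\tau)\|_{\mathbb{X}}+\|f(t_2+\tau)-f(t_2)\|_{\mathbb{X}}$$
bounds each summand by $\varepsilon/3$: the outer two by the almost-period property, the middle one by uniform continuity on the compact interval.

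For the \textbf{Banach space} claim, the norm axioms for $\|\cdot\|_{AP}$ are immediate once boundedness guarantees the supremum is finite. Since $BC(\mathbb{R},\mathbb{X})$ endowed with the sup norm is complete (because $\mathbb{X}$ is complete and a uniformly Cauchy sequence of continuous functions converges uniformly to a continuous limit), completeness of $AP(\mathbb{R},\mathbb{X})$ reduces to showing that it is a \emph{closed} subspace of $BC(\mathbb{R},\mathbb{X})$, that is, that the uniform limit $f$ of a sequence $f_n\in AP(\mathbb{R},\mathbb{X})$ is again almost periodic. This is the step requiring care: given $\varepsilon>0$, I would choose $n$ with $\|f_n-f\|_{AP}<\varepsilon/3$ and take an $(\varepsilon/3)$-almost period $\tau$ of $f_n$; then the estimate
$$\|f(t+\tau)-f(t)\|_{\mathbb{X}}\le\|f(t+\tau)-f_n(t+\tau)\|_{\mathbb{X}}+\|f_n(t+\tau)-f_n(t)\|_{\mathbb{X}}+\|f_n(t)-f(t)\|_{\mathbb{X}}<\varepsilon$$
holds uniformly in $t$, showing that $\tau$ is an $\varepsilon$-almost period of $f$ and hence that $f\in AP(\mathbb{R},\mathbb{X})$.

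The main obstacle, though modest in difficulty, is precisely this last transfer of the almost-period sets from $f_n$ to $f$, since it is where the interaction between uniform convergence and the Bohr translation condition must be handled; by contrast, the boundedness and uniform-continuity assertions are routine arguments combining compactness on a fixed interval of length $l$ with a single well-chosen translation by an almost period.
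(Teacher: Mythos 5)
The paper does not prove this lemma at all---it is quoted with a citation to reference [5]---so there is no in-paper argument to compare against; your proof can only be measured against the classical one in the literature, which it reproduces correctly and in full: boundedness and uniform continuity by translating an arbitrary point back into a fixed compact interval via a well-chosen almost period, and completeness by showing $AP(\mathbb{R},\mathbb{X})$ is closed in the complete space $BC(\mathbb{R},\mathbb{X})$ through the $\varepsilon/3$ transfer of almost periods from $f_n$ to the uniform limit $f$. All details check out, including the one step needing care---keeping $t_2+\tau$ inside the compact interval $[-1,l+1]$ by insisting $\delta<1$---so your argument is a complete and correct substitute for the cited proof.
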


\begin{theorem}(\cite{4}) (Bochner property) Let $f\in BC(\mathbb{R},\mathbb{X})$, then $f$ belongs to $AP(\mathbb{R},\mathbb{X})$ if only if the family $\mathcal{F}=\{f(t+h);h\in \mathbb{R}\}$ is relatively compact in $BC(\mathbb{R},\mathbb{X})$.
\end{theorem}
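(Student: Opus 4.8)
The plan is to establish the two implications separately, using throughout that $BC(\mathbb{R},\mathbb{X})$ is a complete metric space under the supremum norm (because $\mathbb{X}$ is a Banach space); consequently a subset is relatively compact if and only if it is totally bounded, equivalently if and only if every sequence drawn from it admits a subsequence converging uniformly on $\mathbb{R}$. For the forward implication, I would assume $f\in AP(\mathbb{R},\mathbb{X})$ and fix $\varepsilon>0$. By Definition 2.1 there is a length $l=l(\varepsilon)$ such that every interval of length $l$ contains an $\varepsilon$-translation number. Given an arbitrary $h\in\mathbb{R}$, I would pick a translation number $\tau$ in the interval $[-h,-h+l]$ and set $s:=h+\tau\in[0,l]$; substituting $u=t+h$ then yields $\sup_{t\in\mathbb{R}}\|f(t+h)-f(t+s)\|_{\mathbb{X}}=\sup_{u\in\mathbb{R}}\|f(u+\tau)-f(u)\|_{\mathbb{X}}<\varepsilon$. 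Thus every translate $f(\cdot+h)$ lies within $\varepsilon$ of the set $\{f(\cdot+s):s\in[0,l]\}$.

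To finish this direction I would observe that, since $f$ is uniformly continuous by Lemma 2.1, the map $s\mapsto f(\cdot+s)$ from the compact interval $[0,l]$ into $BC(\mathbb{R},\mathbb{X})$ is continuous, so its image is compact and hence totally bounded. Covering this image by finitely many $\varepsilon$-balls centred at $f(\cdot+s_1),\dots,f(\cdot+s_n)$ and combining with the estimate above shows that the $2\varepsilon$-balls about these same centres cover $\mathcal{F}$. Since $\varepsilon>0$ was arbitrary, $\mathcal{F}$ is totally bounded, and by completeness it is relatively compact.

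For the converse I would argue by contraposition: assuming $f$ is not almost periodic, I would produce a sequence of translates with no uniformly convergent subsequence, which contradicts relative compactness. Failure of the Bohr property yields some $\varepsilon_0>0$ for which the set $E(\varepsilon_0)$ of $\varepsilon_0$-translation numbers is not relatively dense, i.e. for every $l>0$ there is an interval of length $l$ containing no point of $E(\varepsilon_0)$. I would then build $(h_n)$ inductively: with $h_1=0,\dots,h_n$ already chosen and $M=\max_{j\le n}|h_j|$, I pick a translation-number-free interval $(a,a+l)$ with $l>2M$ and set $h_{n+1}=a+\tfrac{l}{2}$, so that each difference $h_{n+1}-h_j$ lands in $(a,a+l)$ and therefore fails to belong to $E(\varepsilon_0)$. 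A change of variables gives $\sup_{t\in\mathbb{R}}\|f(t+h_{n+1})-f(t+h_j)\|_{\mathbb{X}}\ge\varepsilon_0$ for every $j\le n$, so the entire sequence $\bigl(f(\cdot+h_n)\bigr)$ is $\varepsilon_0$-separated and can contain no Cauchy, hence no convergent, subsequence.

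The main obstacle is precisely this last construction: one must arrange the shifts so that \emph{all} pairwise differences avoid $E(\varepsilon_0)$, which requires coordinating the length $l$ of the empty interval with the spread $M$ of the shifts already selected. The forward direction, by contrast, is a routine total-boundedness argument once the uniform continuity supplied by Lemma 2.1 is invoked.
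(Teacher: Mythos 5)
Your proof is correct. One point of orientation: the paper never actually proves this theorem --- it is quoted from Corduneanu \cite{4} --- so the relevant comparison is with the paper's own proofs of the Besicovitch analogue, Lemmas 3.5 and 3.6. Your converse direction (contraposition: build each new shift $h_{n+1}$ as the midpoint of a translation-number-free interval whose length exceeds twice the spread of the shifts already chosen, so that every difference $h_{n+1}-h_j$ lands in that interval and the translates become $\varepsilon_0$-separated) is exactly the construction of Lemma 3.6, transplanted from the $\mathcal{M}_{\flat}$-seminorm to the sup norm --- where it is in fact cleaner, since $\sup_{t\in\mathbb{R}}$ is exactly translation invariant and none of the paper's manipulation of averaging windows $[-T-h_j,T+h_j]$ is needed. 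Your forward direction, however, takes a genuinely different route from Lemma 3.5: you prove total boundedness directly, observing that every translate lies within $\varepsilon$ of the set $\{f(\cdot+s):s\in[0,l]\}$, which is compact as the continuous image of $[0,l]$ (continuity of $s\mapsto f(\cdot+s)$ being exactly the uniform continuity supplied by Lemma 2.1), and then you invoke completeness of $BC(\mathbb{R},\mathbb{X})$. The paper's Lemma 3.5 argues sequentially instead: given translates $f(\cdot+h_n)$, it picks translation numbers $\tau_n\in[-h_n,-h_n+l]$, applies Bolzano--Weierstrass to the bounded sequence $\tau_n+h_n\in[0,l]$, and closes with a three-$\varepsilon$ estimate. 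The ingredients are identical (relatively dense translation numbers confine all translates to a compact window of shifts; uniform continuity controls that window), but your covering formulation buys something: it yields total boundedness for every $\varepsilon$ at once, whereas the sequential argument as written fixes a single $\varepsilon$ and, to produce a genuinely convergent subsequence, would still require a diagonal extraction over $\varepsilon=1/m$ --- a step the paper's Lemma 3.5 glosses over.
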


\begin{lemma}(\cite{4}) The space $M_{\flat}(\mathbb{R},\mathbb{X})$ is a complete space with the seminorm $\|\cdot\|_{M_{\flat}}$.
\end{lemma}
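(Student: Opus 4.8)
The plan is to prove completeness directly from the definition: given a sequence $(f_n)$ that is Cauchy for the seminorm $\|\cdot\|_{M_{\flat}}$, I would produce a limit $f \in M_{\flat}(\mathbb{R},\mathbb{X})$ with $\|f_n - f\|_{M_{\flat}} \to 0$. Since a Cauchy sequence converges as soon as one of its subsequences does, I first pass to a subsequence, still denoted $(f_n)$, satisfying $\|f_{n+1} - f_n\|_{M_{\flat}} \le 2^{-n}$, and set $g_n = f_{n+1} - f_n$, so that $\sum_n \|g_n\|_{M_{\flat}} < \infty$. The task then reduces to showing that the partial sums $s_n = \sum_{k=1}^n g_k$ converge in the seminorm, after which $f := f_1 + \lim_n s_n$ is the desired limit. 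The essential difficulty, and the reason ordinary $L_{loc}^{\flat}$ completeness cannot simply be invoked on each bounded interval, is that $\|\cdot\|_{M_{\flat}}$ controls only a $\limsup$ of averages over large intervals, so smallness of $\|g_k\|_{M_{\flat}}$ conveys no information about $g_k$ on any fixed bounded set.

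To get around this I would exploit the $\limsup$ in the definition to choose thresholds. Writing $\|h\|_T = \big(\frac{1}{2T}\int_{-T}^T \|h(t)\|_{\mathbb{X}}^{\flat}\,dt\big)^{1/\flat}$ and $\varepsilon_k = \|g_k\|_{M_{\flat}} = \limsup_{T\to\infty}\|g_k\|_T$, the meaning of $\limsup$ yields, for each $k$, a number $T_k$ with $\|g_k\|_T \le \varepsilon_k + 2^{-k} =: \delta_k$ for all $T \ge T_k$; I may take $T_k \uparrow \infty$, and note $\sum_k \delta_k < \infty$. I then build the limit on the annuli determined by these thresholds: set $f = 0$ on $\{|t| < T_1\}$ and $f = s_m$ on each annulus $A_m = \{T_m \le |t| < T_{m+1}\}$. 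On every bounded set only finitely many pieces occur, so $f \in L_{loc}^{\flat}(\mathbb{R},\mathbb{X})$ is automatic.

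The heart of the argument is the estimate of $\|f - s_n\|_{M_{\flat}}$. Fixing $T$ large, say $T_M \le T < T_{M+1}$ with $M \ge n$, I would split $\int_{-T}^T \|f - s_n\|_{\mathbb{X}}^{\flat}\,dt$ into the part over $\{|t| < T_n\}$ and the part over $\{T_n \le |t| < T\}$. On the first region $f - s_n$ is a fixed function independent of $T$, so this contribution is a constant $C_n$ that disappears after division by $2T$ as $T\to\infty$. On each annulus $A_m$ with $m \ge n$ one has $f - s_n = g_{n+1} + \cdots + g_m$, whence $\|f - s_n\|_{\mathbb{X}} \le \sum_{k=n+1}^M \|g_k\|_{\mathbb{X}}$; applying Minkowski's inequality over the region $\{T_n \le |t| < T\}$ and then the threshold bounds $\|g_k\|_T \le \delta_k$ (valid since $T \ge T_M \ge T_k$ for all $k \le M$) gives $\frac{1}{2T}\int_{T_n \le |t| < T}\|f-s_n\|_{\mathbb{X}}^{\flat}\,dt \le \big(\sum_{k>n}\delta_k\big)^{\flat}$. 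Taking $\limsup_{T\to\infty}$ therefore yields $\|f - s_n\|_{M_{\flat}} \le \sum_{k>n}\delta_k \to 0$.

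Finally I would record that $\|f\|_{M_{\flat}} \le \|f - s_n\|_{M_{\flat}} + \|s_n\|_{M_{\flat}} < \infty$, so $f \in M_{\flat}(\mathbb{R},\mathbb{X})$, that $s_n \to f - f_1$ in the seminorm, hence the subsequence $f_{n+1} = f_1 + s_n$ converges to $f$, and that convergence of the original Cauchy sequence follows from the standard fact that a Cauchy sequence sharing a limit with one of its subsequences converges to that limit. I expect the main obstacle to be the correct design of the annular decomposition together with the verification that the telescoping tails $g_{n+1}+\cdots+g_m$ are controlled uniformly in $m$ by $\sum_{k>n}\delta_k$ through Minkowski's inequality; the remaining points are bookkeeping with the $\limsup$.
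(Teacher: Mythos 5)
Your proof is correct, and it is worth noting at the outset that the paper itself offers no proof of this lemma: it is quoted from Corduneanu's monograph (the paper's reference [4]), so your argument supplies a proof that the paper omits. What you give is essentially the classical completeness argument for Marcinkiewicz-type seminorm spaces, and it correctly isolates the one genuine difficulty: since $\|\cdot\|_{M_{\flat}}$ only controls a $\limsup$ of averages over large symmetric intervals, smallness in this seminorm says nothing about the functions on any fixed bounded set, so no local $L^{\flat}$ limit can be extracted; your remedy of gluing the partial sums $s_m$ on the annuli $A_m=\{T_m\le |t|<T_{m+1}\}$, with thresholds $T_k$ chosen from the definition of the $\limsup$ so that $\|g_k\|_T\le\delta_k$ for all $T\ge T_k$, is exactly the right device. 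The estimates all check out: for $T_M\le T<T_{M+1}$ with $M\ge n$ the telescoping bound $\|f-s_n\|_{\mathbb{X}}\le\sum_{k=n+1}^{M}\|g_k\|_{\mathbb{X}}$ holds pointwise on $\{T_n\le|t|<T\}$, Minkowski together with the threshold bounds (valid since $T\ge T_M\ge T_k$ for all $k\le M$) gives a bound $\sum_{k>n}\delta_k$ uniform in $M$, the region $\{|t|<T_n\}$ contributes a fixed constant that is killed by the factor $1/(2T)$, and $\sum_k\delta_k<\infty$ because $\varepsilon_k\le 2^{-k}$ after passing to the subsequence; convergence of the full Cauchy sequence then follows from convergence of the subsequence, as you say. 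Only two cosmetic points deserve attention in a final write-up: you overload the symbol $f$ (first as $f_1+\lim_n s_n$, then as the annular gluing of the $s_m$, which is the limit of the $s_n$ alone, so the limit of the $f_n$ is $f_1$ plus the glued function), and you should record explicitly that the constant $C_n=\int_{|t|<T_n}\|f-s_n\|_{\mathbb{X}}^{\flat}\,dt$ is finite because on $\{|t|<T_n\}$ the glued function is a finite patchwork of the $L_{loc}^{\flat}$ functions $0,s_1,\ldots,s_{n-1}$.
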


\section{Besicovitch almost periodic functions and their basic properties}
\setcounter{equation}{0}

In this section, we start with the Bohr definition of  Besicovitch almost periodic functions   and derive some important properties of  Besicovitch almost periodic functions in Bohr's sense, including translation invariance and composition theorem, and obtain the equivalence between Bohr's definition and Bochner's definition of almost periodic functions.

Let
$$L_{\flat}:=\bigg\{f:f\in M_{\flat}(\mathbb{R},\mathbb{X}),~\limsup\limits_{T\rightarrow +\infty}\frac{1}{2T}\int_{-T}^{T}\|f(t)\|_{\mathbb{X}}^{\flat}dt=0\bigg\}.$$

\begin{lemma}\cite{4} The set $L_{\flat}$ is a closed linear manifold in $M_{\flat}(\mathbb{R},\mathbb{X})$.
\end{lemma}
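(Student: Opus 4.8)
The plan is to recognize $L_{\flat}$ as precisely the null set of the seminorm, namely $L_{\flat}=\{f\in M_{\flat}(\mathbb{R},\mathbb{X}):\|f\|_{M_{\flat}}=0\}$, and then to deduce both the linear and the topological (closed) structure from the single fact that $\|\cdot\|_{M_{\flat}}$ is genuinely a seminorm. Accordingly, the first and central step is to verify the triangle inequality $\|f+g\|_{M_{\flat}}\le\|f\|_{M_{\flat}}+\|g\|_{M_{\flat}}$ together with the homogeneity $\|\alpha f\|_{M_{\flat}}=|\alpha|\,\|f\|_{M_{\flat}}$. Writing $N_{T}(f):=\big(\frac{1}{2T}\int_{-T}^{T}\|f(t)\|_{\mathbb{X}}^{\flat}\,dt\big)^{1/\flat}$, homogeneity is immediate, while the triangle inequality follows by applying Minkowski's inequality on the interval $[-T,T]$ (here $\flat\ge 1$ is essential) to obtain $N_{T}(f+g)\le N_{T}(f)+N_{T}(g)$, and then passing to the limit superior via the subadditivity $\limsup_{T}(a_{T}+b_{T})\le\limsup_{T}a_{T}+\limsup_{T}b_{T}$.

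With the seminorm properties in hand, I would argue that $L_{\flat}$ is a linear subspace. It clearly contains the zero function. If $f\in L_{\flat}$ and $\alpha$ is a scalar, homogeneity gives $\|\alpha f\|_{M_{\flat}}=|\alpha|\cdot 0=0$, so $\alpha f\in L_{\flat}$. If $f,g\in L_{\flat}$, the triangle inequality yields $0\le\|f+g\|_{M_{\flat}}\le\|f\|_{M_{\flat}}+\|g\|_{M_{\flat}}=0$, whence $f+g\in L_{\flat}$; the same triangle inequality simultaneously guarantees that $f+g$ stays in $M_{\flat}(\mathbb{R},\mathbb{X})$, so membership is consistent.

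For closedness, I would take a sequence $\{f_{n}\}\subset L_{\flat}$ converging to some $f\in M_{\flat}(\mathbb{R},\mathbb{X})$ in the seminorm, i.e. $\|f_{n}-f\|_{M_{\flat}}\to 0$. Applying the triangle inequality once more gives $\|f\|_{M_{\flat}}\le\|f-f_{n}\|_{M_{\flat}}+\|f_{n}\|_{M_{\flat}}=\|f-f_{n}\|_{M_{\flat}}\to 0$, so $\|f\|_{M_{\flat}}=0$ and $f\in L_{\flat}$; hence $L_{\flat}$ is closed. I expect the only genuine work to lie in the triangle inequality of the opening step, specifically the interplay between Minkowski's inequality at each finite $T$ and the subadditivity (rather than additivity) of $\limsup$; once that inequality is secured, the linearity and closedness follow as purely formal consequences of $L_{\flat}$ being the kernel of a seminorm.
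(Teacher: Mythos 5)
Your proof is correct: $L_{\flat}$ is exactly the null set of the seminorm $\|\cdot\|_{M_{\flat}}$, and once the seminorm properties are in place (homogeneity, and the triangle inequality via Minkowski at each finite $T$ followed by subadditivity of $\limsup$, using that the $1/\flat$-th power commutes with $\limsup$ by continuity and monotonicity), linearity and closedness follow formally just as you argue. The paper gives no proof of its own---it cites Corduneanu's book---and your kernel-of-a-seminorm argument is the standard one found there, so your approach agrees with the source in all essentials.
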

We shall introduce a relation in $M_{\flat}(\mathbb{R},\mathbb{X})$ as follows:
$$u\simeq v,~u,v\in M_{\flat}(\mathbb{R},\mathbb{X}),~if~u-v\in L_{\flat}.$$\par
It is easy to check that this relation, symbolized by $\simeq$, is indeed an equivalence relation.

The quotient space $M_{\flat}(\mathbb{R},\mathbb{X})/L_{\flat}$ is the set of equivalence classes with respect to the relation $\simeq$, organized in accordance with the operations $[u]+[v]=[u+v]$, $u,v\in M_{\flat}(\mathbb{R},\mathbb{X})$, and $\lambda[u]=[\lambda u]$ for $u\in M_{\flat}(\mathbb{R},\mathbb{X})$ and for any scalar $\lambda\in \mathbb{R}$.

\begin{definition}\cite{4} The quotient space $M_{\flat}(\mathbb{R},\mathbb{X})/L_{\flat}$ is called Marcinkiewicz space and denoted by $\mathcal{M}_{\flat}$.\end{definition}

\begin{lemma}\cite{4} The Marcinkiewicz function space $\mathcal{M}_{\flat}(\mathbb{R},\mathbb{X})$ is a Banach space with the norm defined by $$\|[x]\|_{\mathcal{M}_{\flat}}=\|x+L_{\flat}\|_{\mathcal{M}_{\flat}}:=\inf\{\|y\|_{{M}_{\flat}}:y\in x+L_{\flat}\},$$
where $[x]\in \mathcal{M}_{\flat}.$
\end{lemma}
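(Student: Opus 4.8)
The plan is to exploit the fact that $L_{\flat}$ is precisely the kernel of the seminorm $\|\cdot\|_{M_{\flat}}$: this collapses the quotient norm to the seminorm of any representative and thereby reduces every assertion about $\mathcal{M}_{\flat}$ to the corresponding property of $(M_{\flat},\|\cdot\|_{M_{\flat}})$ already granted by Lemma 2.3. The vector-space structure of the quotient (and the well-definedness of the operations $[u]+[v]=[u+v]$, $\lambda[u]=[\lambda u]$) is available from the discussion preceding Definition 3.2, so the work splits into verifying the norm axioms and establishing completeness.

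First I would record the key identity $\|[x]\|_{\mathcal{M}_{\flat}}=\|x\|_{M_{\flat}}$ for every representative $x$ of the class. Indeed, for any $\ell\in L_{\flat}$ one has $\|\ell\|_{M_{\flat}}=0$ by the very definition of $L_{\flat}$, so the triangle inequality for the seminorm yields both $\|x+\ell\|_{M_{\flat}}\le\|x\|_{M_{\flat}}$ and $\|x\|_{M_{\flat}}\le\|x+\ell\|_{M_{\flat}}$, whence $\|x+\ell\|_{M_{\flat}}=\|x\|_{M_{\flat}}$. Taking the infimum over $\ell\in L_{\flat}$ gives the identity; in particular the infimum is attained and the quantity $\|[x]\|_{\mathcal{M}_{\flat}}$ does not depend on the chosen representative.

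With this identity in hand the norm axioms follow at once. Homogeneity and the triangle inequality transfer verbatim through the quotient operations, since $\|\lambda[x]\|_{\mathcal{M}_{\flat}}=\|\lambda x\|_{M_{\flat}}=|\lambda|\,\|x\|_{M_{\flat}}$ and $\|[x]+[y]\|_{\mathcal{M}_{\flat}}=\|x+y\|_{M_{\flat}}\le\|x\|_{M_{\flat}}+\|y\|_{M_{\flat}}$. For positive definiteness, which is the single place where passing to the quotient is essential, I would note that $\|[x]\|_{\mathcal{M}_{\flat}}=0$ forces $\|x\|_{M_{\flat}}=0$, i.e. $x\in L_{\flat}$, and this is exactly the statement $[x]=[0]$ in $\mathcal{M}_{\flat}$.

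The substantive step is completeness. Given a Cauchy sequence $\{[x_{n}]\}$ in $\mathcal{M}_{\flat}$, I would choose an arbitrary representative $x_{n}$ of each class; the identity above converts $\|[x_{n}]-[x_{m}]\|_{\mathcal{M}_{\flat}}$ into $\|x_{n}-x_{m}\|_{M_{\flat}}$, so $\{x_{n}\}$ is Cauchy in $M_{\flat}$. Invoking the completeness of $M_{\flat}$ from Lemma 2.3, there is $x\in M_{\flat}$ with $\|x_{n}-x\|_{M_{\flat}}\to 0$, and then $\|[x_{n}]-[x]\|_{\mathcal{M}_{\flat}}=\|x_{n}-x\|_{M_{\flat}}\to 0$, so $[x_{n}]\to[x]$ in $\mathcal{M}_{\flat}$. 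The only point demanding care is that $\|\cdot\|_{M_{\flat}}$ is genuinely a seminorm, in particular the Minkowski-type triangle inequality for the $\limsup$-average used above; but this is exactly the content of Lemma 2.3 and may be used freely. Beyond that, the argument is the standard construction of ``the quotient of a complete seminormed space by the kernel of its seminorm,'' and I expect no real obstacle.
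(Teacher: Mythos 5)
Your proof is correct, but there is nothing in the paper to compare it against: the paper states this lemma with a citation to \cite{4} (Corduneanu) and gives no proof of its own. Your argument is the standard one---the quotient of a complete seminormed space by the kernel of the seminorm is a Banach space---and it is the argument the cited reference uses; in fact your central identity $\|x+L_{\flat}\|_{\mathcal{M}_{\flat}}=\|x\|_{M_{\flat}}$ is precisely what the paper records without proof as Remark 3.1, and the coset arithmetic $[x]-[y]=[x-y]$ you rely on is Remark 3.2. All the essential ingredients are handled properly: $L_{\flat}$ is exactly the null space of $\|\cdot\|_{M_{\flat}}$ (so positive definiteness holds in the quotient), the infimum defining the quotient norm is attained because every member of a coset has the same seminorm, and completeness is pulled back to representatives and settled by the completeness of $(M_{\flat},\|\cdot\|_{M_{\flat}})$. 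Two bookkeeping slips, neither of which affects the mathematics: the completeness of $M_{\flat}$ is Lemma 2.2 of the paper (not Lemma 2.3), and the quotient-space operations are introduced in the discussion preceding Definition 3.1 (not Definition 3.2).
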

\begin{remark}It is easy to check $\|x+L_{\flat}\|_{\mathcal{M}_{\flat}}=\|x\|_{{M}_{\flat}}$ for any $x\in {M}_{\flat}(\mathbb{R},\mathbb{X})$.
\end{remark}

\begin{remark}For $f,g\in {\mathcal{M}_{\flat}}(\mathbb{R},\mathbb{X})$, we have $(f+L_{\flat})-(g+L_{\flat})=f-g+L_{\flat}$.
\end{remark}

\begin{definition}\label{defb} (Bohr's definition) Let $f\in \mathcal{M}_{\flat}(\mathbb{R},\mathbb{X})$, then $f$ is called Besicovitch almost periodic   if the following properties hold true:
\begin{itemize}
\item [$(1)$]$f$ is uniformly continuous in the $\mathcal{M}_{\flat}$-norm, that is, for any $\varepsilon>0$ there exists $\delta=\delta(\varepsilon)>0$ such that
 $$\|f(t+h)-f(t)\|_{\mathcal{M}_{\flat}}<\varepsilon,$$
 for all $h\in \mathbb{R}$, $|h|<\delta$,
\item [$(2)$] for any $\varepsilon>0$, it is possible to find a real number $l=l(\varepsilon)>0$, for any interval with length $l$, there exists a number $\tau=\tau(\varepsilon)$ in this interval such that $$\|f(t+\tau)-f(t)\|_{\mathcal{M}_{\flat}}<\varepsilon.$$
\end{itemize}
The number $\tau$ is then called a $\varepsilon$-translation number of $f$. The collection of all Besicovitch almost periodic functions will be denoted by $B_{AP}^{\flat}(\mathbb{R},\mathbb{X})$. For $f\in B_{AP}^{\flat}(\mathbb{R},\mathbb{X})$, we define   $\|f\|_{B^{\flat}}=\|f\|_{\mathcal{M}_{\flat}}$.
\end{definition}

We denote by $UC\mathcal{M}_{\flat}(\mathbb{R},\mathbb{X})$ the collection of functions $f\in \mathcal{M}_{\flat}$ that satisfy condition (1) in Definition \ref{defb}.

\begin{remark}It is worth noting that, in general, when we write that a function belongs to $B_{AP}^{\flat}(\mathbb{R},\mathbb{X})$ we do not have in mind the function $f$ itself, it does represent a whole class that is equivalent to the function $f$.
\end{remark}

\begin{lemma}\label{lem33} If $f\in B_{AP}^{\flat}(\mathbb{R},\mathbb{X})$ and $\alpha \in \mathbb{R}$, then we have $f(\cdot - \alpha)\in B_{AP}^{\flat}(\mathbb{R},\mathbb{X})$.
\end{lemma}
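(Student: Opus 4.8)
The plan is to reduce the whole statement to a single translation-invariance property of the Marcinkiewicz seminorm, and then to read off the two defining conditions of Definition \ref{defb} for the translate $g:=f(\cdot-\alpha)$ essentially for free.

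First I would isolate and prove the key auxiliary fact: for every $u\in M_{\flat}(\mathbb{R},\mathbb{X})$ and every $\alpha\in\mathbb{R}$,
$$\|u(\cdot-\alpha)\|_{M_{\flat}}=\|u\|_{M_{\flat}}.$$
The substitution $s=t-\alpha$ gives $\int_{-T}^{T}\|u(t-\alpha)\|_{\mathbb{X}}^{\flat}\,dt=\int_{-T-\alpha}^{T-\alpha}\|u(s)\|_{\mathbb{X}}^{\flat}\,ds$. Since $[-T-\alpha,T-\alpha]\subseteq[-(T+|\alpha|),T+|\alpha|]$ for either sign of $\alpha$ and the integrand is nonnegative, writing $S=T+|\alpha|$ yields
$$\frac{1}{2T}\int_{-T-\alpha}^{T-\alpha}\|u(s)\|_{\mathbb{X}}^{\flat}\,ds\leq\frac{S}{T}\cdot\frac{1}{2S}\int_{-S}^{S}\|u(s)\|_{\mathbb{X}}^{\flat}\,ds.$$
Because $S/T\to1$ as $T\to+\infty$, passing to the $\limsup$ gives $\|u(\cdot-\alpha)\|_{M_{\flat}}\leq\|u\|_{M_{\flat}}$; applying the same bound to $u(\cdot-\alpha)$ with shift $-\alpha$ gives the reverse inequality, hence equality. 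This is the only place a genuine estimate is required and is the main (though mild) obstacle: one must confirm that the two boundary strips of length $|\alpha|$ contribute nothing after division by $2T$ and passage to the $\limsup$, which the enlargement argument above handles cleanly.

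Using the Remark that $\|x+L_{\flat}\|_{\mathcal{M}_{\flat}}=\|x\|_{M_{\flat}}$, this identity also shows that translation is well defined on $\mathcal{M}_{\flat}$: if $f\simeq\tilde f$, then $f-\tilde f\in L_{\flat}$ forces $\|(f-\tilde f)(\cdot-\alpha)\|_{M_{\flat}}=0$, so $f(\cdot-\alpha)\simeq\tilde f(\cdot-\alpha)$. In particular $g\in\mathcal{M}_{\flat}$ with $\|g\|_{\mathcal{M}_{\flat}}=\|f\|_{\mathcal{M}_{\flat}}<+\infty$, so $g$ represents a legitimate element of the quotient space.

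Finally I would verify conditions (1) and (2). The crucial observation is that for any $h\in\mathbb{R}$ the difference $g(\cdot+h)-g(\cdot)$ is exactly the $\alpha$-translate of $f(\cdot+h)-f(\cdot)$, so the invariance fact gives
$$\|g(\cdot+h)-g(\cdot)\|_{\mathcal{M}_{\flat}}=\|f(\cdot+h)-f(\cdot)\|_{\mathcal{M}_{\flat}}.$$
Hence any $\delta$ witnessing uniform $\mathcal{M}_{\flat}$-continuity of $f$ works verbatim for $g$, yielding (1); and for any $\varepsilon$-translation number $\tau$ of $f$ one has $\|g(\cdot+\tau)-g(\cdot)\|_{\mathcal{M}_{\flat}}=\|f(\cdot+\tau)-f(\cdot)\|_{\mathcal{M}_{\flat}}<\varepsilon$, so the relatively dense set of $\varepsilon$-translation numbers of $f$ is also one for $g$, yielding (2). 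Therefore $g=f(\cdot-\alpha)\in B_{AP}^{\flat}(\mathbb{R},\mathbb{X})$, which completes the argument.
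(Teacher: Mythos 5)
Your proposal is correct, and its computational core is the same as the paper's: a change of variables $t\mapsto t+\alpha$ followed by enlarging the integration window and noting that the normalization ratio tends to $1$, so the shifted averages have the same $\limsup$. Where you differ is in organization and completeness, and the difference is in your favor. The paper applies this computation once, directly to $\|f(\cdot+\tau+\alpha)-f(\cdot+\alpha)\|_{B^{\flat}}$, obtaining only a one-sided inequality and verifying only condition (2) of Definition \ref{defb}; it never checks condition (1) (uniform $\mathcal{M}_{\flat}$-continuity of the translate) nor that translation is well defined on equivalence classes in $\mathcal{M}_{\flat}$. You instead isolate the translation invariance $\|u(\cdot-\alpha)\|_{M_{\flat}}=\|u\|_{M_{\flat}}$ as a standalone identity (proving equality by applying the estimate in both directions), use it to show the map $[f]\mapsto[f(\cdot-\alpha)]$ respects the quotient by $L_{\flat}$, and then read off both (1) and (2) from the single observation that $g(\cdot+h)-g(\cdot)$ is the $\alpha$-translate of $f(\cdot+h)-f(\cdot)$. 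This buys you a reusable lemma, a genuinely complete verification of the definition, and an answer to the representative-versus-class subtlety that the paper only acknowledges in a remark; the paper's version is shorter but leaves those gaps to the reader.
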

\begin{proof}Using the fact that $f\in B_{AP}^{\flat}(\mathbb{R},\mathbb{X})$, we have for any $\varepsilon>0$, it is possible to find a real number $l=l(\varepsilon)>0$, for any interval with length $l$, there exists a number $\tau=\tau(\varepsilon)$ in this interval such that $$\|f(t+\tau)-f(t)\|_{B^{\flat}}<\varepsilon.$$
Since
\begin{align*}
\|f(t+\tau+\alpha)-f(t+\alpha)\|_{B^{\flat}}&=\left(\limsup\limits_{T\rightarrow+\infty}\frac{1}{2T}\int_{-T}^{T}\|f(t+\tau+\alpha)-f(t+\alpha)\|_{\mathbb{X}}^{\flat}dt\right)^{\frac{1}{\flat}}\\
&=\left(\limsup\limits_{T\rightarrow +\infty}\frac{1}{2T}\int_{-T+\alpha}^{T+\alpha}\|f(t+\tau)-f(t)\|_{\mathbb{X}}^{\flat}dt\right)^{\frac{1}{\flat}}\\
&\leq\left(\limsup\limits_{T\rightarrow +\infty}\frac{1}{2T}\int_{-T-\alpha}^{T+\alpha}\|f(t+\tau)-f(t)\|_{\mathbb{X}}^{\flat}dt\right)^{\frac{1}{\flat}}<\varepsilon.
\end{align*}
Hence,  $f(\cdot - \alpha)\in B_{AP}^{\flat}(\mathbb{R},\mathbb{X})$. The proof is complete.
\end{proof}
\begin{remark}
In the proof of Lemma \ref{lem33}, the $f$'s in the left side of the formula are  equivalence classes and the $f$'s in the right side are representative elements of the equivalence classes.   We will not make an explanation later, it is clear from the context.
\end{remark}
\begin{lemma}\label{lem34} If $f\in B_{AP}^{\flat}(\mathbb{R},\mathbb{X})$ and $\lambda \in \mathbb{R}$, then we have $\lambda f\in B_{AP}^{\flat}(\mathbb{R},\mathbb{X})$.
\end{lemma}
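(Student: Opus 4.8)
The plan is to verify both defining conditions of Definition \ref{defb} for $\lambda f$, using the fact that $f$ already satisfies them. The key observation is that scaling a function by a real constant $\lambda$ scales its $\mathcal{M}_{\flat}$-seminorm by $|\lambda|$, since $\|\lambda g\|_{B^{\flat}} = \left(\limsup_{T\to+\infty}\frac{1}{2T}\int_{-T}^{T}\|\lambda g(t)\|_{\mathbb{X}}^{\flat}\,dt\right)^{1/\flat} = |\lambda|\,\|g\|_{B^{\flat}}$, because $\|\lambda g(t)\|_{\mathbb{X}}^{\flat} = |\lambda|^{\flat}\|g(t)\|_{\mathbb{X}}^{\flat}$ and the constant $|\lambda|^{\flat}$ pulls out of both the integral and the $\limsup$. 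First I would dispose of the trivial case $\lambda = 0$, where $\lambda f$ is (equivalent to) the zero function and membership in $B_{AP}^{\flat}(\mathbb{R},\mathbb{X})$ is immediate, so I may assume $\lambda \neq 0$.

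For condition $(2)$, given $\varepsilon > 0$, I would apply the almost periodicity of $f$ with the smaller tolerance $\varepsilon/|\lambda|$: there is a length $l = l(\varepsilon/|\lambda|) > 0$ so that every interval of length $l$ contains a number $\tau$ with $\|f(\cdot+\tau) - f(\cdot)\|_{B^{\flat}} < \varepsilon/|\lambda|$. Then, using the scaling identity together with the linearity of the shift operation, $\|(\lambda f)(\cdot+\tau) - (\lambda f)(\cdot)\|_{B^{\flat}} = \|\lambda\big(f(\cdot+\tau) - f(\cdot)\big)\|_{B^{\flat}} = |\lambda|\,\|f(\cdot+\tau) - f(\cdot)\|_{B^{\flat}} < |\lambda|\cdot\frac{\varepsilon}{|\lambda|} = \varepsilon$, so the same $\tau$ serves as an $\varepsilon$-translation number for $\lambda f$. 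Condition $(1)$, the uniform continuity in the $\mathcal{M}_{\flat}$-norm, follows by exactly the same device: choosing $\delta = \delta(\varepsilon/|\lambda|)$ from the uniform continuity of $f$ gives $\|(\lambda f)(t+h) - (\lambda f)(t)\|_{\mathcal{M}_{\flat}} = |\lambda|\,\|f(t+h) - f(t)\|_{\mathcal{M}_{\flat}} < \varepsilon$ for all $|h| < \delta$.

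This argument is essentially routine and I do not anticipate a serious obstacle; the only point requiring mild care is the justification of the scaling identity for the seminorm, which rests on the elementary facts that $|\lambda|^{\flat}$ factors out of the integral and that multiplication by a nonnegative constant commutes with $\limsup$. I should also remark, consistently with the convention announced after Lemma \ref{lem33}, that the manipulations are carried out on representative elements of the equivalence classes, and that the operation $\lambda[f] = [\lambda f]$ is well defined because $L_{\flat}$ is a linear manifold (Lemma 3.1), so the conclusion $\lambda f \in B_{AP}^{\flat}(\mathbb{R},\mathbb{X})$ is independent of the chosen representative.
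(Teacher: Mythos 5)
Your proposal is correct and follows essentially the same route as the paper's proof: both rest on the scaling identity $\|\lambda g\|_{B^{\flat}}=|\lambda|\,\|g\|_{B^{\flat}}$ applied to the translation difference $f(\cdot+\tau)-f(\cdot)$, so that any $\varepsilon$-translation number for $f$ serves (after rescaling) for $\lambda f$. In fact you are somewhat more careful than the paper, which stops at the bound $|\lambda|\varepsilon$ without rescaling the tolerance, does not treat the case $\lambda=0$, and does not explicitly verify condition $(1)$ of Definition \ref{defb}.
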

\begin{proof}Since $f\in B_{AP}^{\flat}(\mathbb{R},\mathbb{X})$, for $\varepsilon>0$, there exists $l=l(\varepsilon)>0$ such that every interval of length $l$ contains a $\tau$ with the property that
$$\|f(t+\tau)-f(t)\|_{B^{\flat}}<\varepsilon.$$
Now
$$\|\lambda f(t+\tau)-\lambda f(t)\|_{B^{\flat}}=|\lambda|\cdot\| f(t+\tau)- f(t)\|_{B^{\flat}}<|\lambda|\varepsilon.$$
Hence, $\lambda f\in B_{AP}^{\flat}(\mathbb{R},\mathbb{X})$. The proof is complete.
\end{proof}

\begin{lemma}\label{lem35}Let $f\in B_{AP}^{\flat}(\mathbb{R},\mathbb{X})$, then the family $\mathcal{F}=\{f(t+h);h\in \mathbb{R}\}$ is relatively compact in the topology of $\mathcal{M}_{\flat}$.
\end{lemma}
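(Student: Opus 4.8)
The plan is to recognize this as the ``Bohr $\Rightarrow$ Bochner'' implication transplanted into the quotient space $\mathcal{M}_\flat$. Since $\mathcal{M}_\flat$ is a Banach space, hence complete, relative compactness of $\mathcal{F}$ is equivalent to total boundedness, so I would produce, for each $\varepsilon>0$, a finite $\varepsilon$-net drawn from $\mathcal{F}$ itself. The essential tool is that the $\mathcal{M}_\flat$-seminorm is translation invariant, i.e.\ $\|g(\cdot+\alpha)\|_{\mathcal{M}_\flat}=\|g\|_{\mathcal{M}_\flat}$ for every $\alpha\in\mathbb{R}$ and every $g\in M_\flat(\mathbb{R},\mathbb{X})$; this is exactly the computation carried out in the proof of Lemma \ref{lem33}. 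Consequently the map $h\mapsto f(\cdot+h)$ is an isometry of $\mathcal{M}_\flat$, each translate $f(\cdot+h)$ again belongs to $B_{AP}^\flat(\mathbb{R},\mathbb{X})$ by Lemma \ref{lem33}, and so $\mathcal{F}\subseteq\mathcal{M}_\flat$ as required.

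To build the net, I would fix $\varepsilon>0$. By condition $(2)$ of Definition \ref{defb}, choose an inclusion length $l=l(\varepsilon/2)$ so that every interval of length $l$ contains an $(\varepsilon/2)$-translation number of $f$. By condition $(1)$, choose $\delta>0$ so that $\|f(\cdot+s)-f(\cdot)\|_{\mathcal{M}_\flat}<\varepsilon/2$ whenever $|s|<\delta$. Partition $[0,l]$ by points $0=s_0<s_1<\cdots<s_N=l$ with $s_{j+1}-s_j<\delta$, and consider the finite collection $\{f(\cdot+s_j):0\le j\le N\}\subseteq\mathcal{F}$.

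The claim is that this finite collection is an $\varepsilon$-net. Given an arbitrary $h\in\mathbb{R}$, apply condition $(2)$ to the interval $(-h,-h+l)$ to obtain an $(\varepsilon/2)$-translation number $\tau$ lying in it; then $s:=h+\tau\in(0,l)$, so there is an index $j$ with $|s-s_j|<\delta$. Using the triangle inequality together with translation invariance, one finds
\begin{align*}
\|f(\cdot+h)-f(\cdot+s_j)\|_{\mathcal{M}_\flat}
&\le \|f(\cdot+h)-f(\cdot+h+\tau)\|_{\mathcal{M}_\flat}+\|f(\cdot+s)-f(\cdot+s_j)\|_{\mathcal{M}_\flat}\\
&=\|f(\cdot+\tau)-f(\cdot)\|_{\mathcal{M}_\flat}+\|f(\cdot+(s-s_j))-f(\cdot)\|_{\mathcal{M}_\flat}\\
&<\tfrac{\varepsilon}{2}+\tfrac{\varepsilon}{2}=\varepsilon,
\end{align*}
where the first summand is controlled because $\tau$ is an $(\varepsilon/2)$-translation number and the second because $|s-s_j|<\delta$. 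Thus $\mathcal{F}$ is totally bounded, and completeness of $\mathcal{M}_\flat$ yields that $\overline{\mathcal{F}}$ is compact.

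The only point demanding genuine care is that the entire argument takes place in a quotient space: every symbol $f(\cdot+h)$ denotes an equivalence class, and each step—especially translation invariance and the two triangle-inequality estimates—must be read at the level of classes rather than representatives. I expect the main technical obstacle to be justifying translation invariance of $\|\cdot\|_{\mathcal{M}_\flat}$ cleanly, namely that shifting the integration limits changes the average $\frac{1}{2T}\int_{-T}^{T}$ only by contributions over two intervals of the fixed length $|\alpha|$, whose effect vanishes after division by $2T$ as $T\to+\infty$; this is precisely what is implicit in Lemma \ref{lem33}. Once that invariance is granted, the remainder is the classical relative-compactness scheme.
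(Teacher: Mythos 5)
Your proof is correct and reaches the paper's conclusion, but it is packaged differently. The paper argues sequentially, in the style of the Bochner criterion: given an arbitrary sequence of translates $\{f(t+h_n)\}$, it picks $\varepsilon$-translation numbers $\tau_n$ in the intervals $[-h_n,-h_n+l]$, so that $\tau_n+h_n\in[0,l]$, extracts by Bolzano--Weierstrass a subsequence along which these shifts converge, and finishes with a $3\varepsilon$ triangle inequality using uniform continuity. You use exactly the same three ingredients --- the inclusion length reducing an arbitrary shift $h$ to $s=h+\tau\in[0,l]$, uniform continuity, and translation invariance of the $\mathcal{M}_\flat$-seminorm --- but you organize them as a finite $\varepsilon$-net $\{f(\cdot+s_j)\}$ built on a $\delta$-mesh of $[0,l]$, obtaining total boundedness, and then invoke completeness of $\mathcal{M}_\flat$. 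Your packaging is in fact slightly tighter: because the paper fixes $\varepsilon$ \emph{before} extracting its subsequence, what its argument literally yields is, for each $\varepsilon$, a subsequence whose terms are eventually pairwise within $3\varepsilon$; producing a genuinely Cauchy subsequence would still require a diagonal extraction over $\varepsilon\to 0$, a step the paper leaves implicit. Total boundedness is by definition quantified over every $\varepsilon$, so your route closes that gap automatically.

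One caveat concerns your justification of translation invariance. The heuristic you give --- that shifting the window changes the average only by integrals over two intervals of fixed length $|\alpha|$, ``whose effect vanishes after division by $2T$'' --- is false for general elements of $M_\flat(\mathbb{R},\mathbb{X})$: such a function can concentrate mass near $\pm T$ (for instance, spikes with $\int_{2^n}^{2^n+1}\|g(t)\|_{\mathbb{X}}^{\flat}dt=2^n$ give a function of finite $M_\flat$-seminorm for which $\frac{1}{2T}\int_{T}^{T+1}\|g(t)\|_{\mathbb{X}}^{\flat}dt$ does not tend to $0$). The correct argument, and the one actually carried out in Lemma \ref{lem33}, is the domain-enlargement comparison
$\frac{1}{2T}\int_{-T+\alpha}^{T+\alpha}\|g(t)\|_{\mathbb{X}}^{\flat}dt\le\frac{T+|\alpha|}{T}\cdot\frac{1}{2(T+|\alpha|)}\int_{-T-|\alpha|}^{T+|\alpha|}\|g(t)\|_{\mathbb{X}}^{\flat}dt$,
which after taking $\limsup_{T\to+\infty}$ yields $\|g(\cdot+\alpha)\|_{\mathcal{M}_\flat}\le\|g\|_{\mathcal{M}_\flat}$; equality then follows by applying this inequality to $g(\cdot+\alpha)$ with shift $-\alpha$. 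Both of your displayed estimates use translation invariance only in the direction $\le$, so your proof stands verbatim once this justification replaces the heuristic.
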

\begin{proof} Let $\{h_{n}\}_{n\in \mathbb{N}}\subset \mathbb{R}$, then $\{f(t+h_{n});~n\geq1\}\subset\mathcal{F}$ is a sequence of translates of $f$. Since $f\in B_{AP}^{\flat}(\mathbb{R},\mathbb{X})$, one can find a real number $l=l(\varepsilon)>0$, for any interval with length $l$, there exists a number $\tau_n=\tau_n(\varepsilon)$ in this interval $[-h_{n},-h_{n}+l]$ such that $$\|f(t+\tau_n)-f(t)\|_{B^{\flat}}<\varepsilon.$$
By Definition \ref{defb}, we see that $f$ is uniformly continuous in the norm $\|\cdot\|_{B^{\flat}}$. Thus, there exists $\delta=\delta(\varepsilon)>0$ such that
 $$\|f(t+h)-f(t)\|_{B^{\flat}}<\varepsilon,$$
 for all $h\in \mathbb{R}$, $|h|<\delta$.
And for any $h_{n}\in \mathbb{R}$, we have $\tau_n+h_{n}\in [0,l]$, this guarantees the existence of a subsequence $\{h_{n_{k}};k\geq1\}\subset \{h_{n}\}_{n\in \mathbb{N}}$ such that $\{\tau_n+h_{n_{k}};k\geq1\}$ is convergent. Hence, there exists a positive integer $N=N(\varepsilon)$ such that
$$|\tau_n+h_{n_{k}}-\tau_n+h_{n_{k'}}|<\delta,$$
for $k,~k'>N$. Then we obtain
\begin{equation*}
\begin{split}
\|f(t+h_{n_{k}})-f(t+h_{n_{k'}})\|_{B^{\flat}}
&\leq\|f(t+h_{n_{k}})-f(t+\tau_n+h_{n_{k}})\|_{B^{\flat}}\\
&~~+\|f(t+\tau_n+h_{n_{k}})-f(t+\tau_n+h_{n_{k'}})\|_{B^{\flat}}\\
&~~+\|f(t+\tau_n+h_{n_{k'}})-f(t+h_{n_{k'}})\|_{B^{\flat}}\\
&<\varepsilon+\varepsilon+\varepsilon=3\varepsilon,
\end{split}
\end{equation*}
which yields $\{f(t+h_{n_{k}});k\geq1\}$ is uniformly convergent in the $B^{\flat}$-norm, therefore, the family $\mathcal{F}=\{f(t+h);h\in \mathbb{R}\}$ is relatively compact. The proof is complete.
\end{proof}

\begin{definition}(Bochner definition) Function $f\in UC\mathcal{M}_{\flat}(\mathbb{R},\mathbb{X})$ is called Besicovitch almost periodic if for any sequence $\{h'_{n}\}_{n\in \mathbb{N}}$ of real numbers, there exists a subsequence $\{h_{n}\}_{n\in \mathbb{N}}$ of $\{h'_{n}\}_{n\in \mathbb{N}}$ such that $\{f(t+h_{n});n\in \mathbb{N}\}$ converges in the norm $\|\cdot\|_{\mathcal{M}_{\flat}}$.
\end{definition}

\begin{lemma}\label{lem36}The Bohr definition and the Bochner definition of Besicovitch almost periodicity are equivalent.
\end{lemma}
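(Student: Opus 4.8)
The plan is to prove the two implications separately, noting first that both definitions already build in condition $(1)$ of Definition \ref{defb} (that is, membership in $UC\mathcal{M}_{\flat}(\mathbb{R},\mathbb{X})$). Hence the genuine content is to show that, for $f\in UC\mathcal{M}_{\flat}(\mathbb{R},\mathbb{X})$, the relative denseness of the set of $\varepsilon$-translation numbers appearing in Bohr's condition $(2)$ is equivalent to the sequential compactness of the family of translates demanded by the Bochner definition.

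For the implication Bohr $\Rightarrow$ Bochner I would simply invoke Lemma \ref{lem35}: if $f$ is Besicovitch almost periodic in Bohr's sense, then the family $\mathcal{F}=\{f(t+h);h\in\mathbb{R}\}$ is relatively compact in the topology of $\mathcal{M}_{\flat}$. Since the Marcinkiewicz space $\mathcal{M}_{\flat}(\mathbb{R},\mathbb{X})$ is a Banach space, hence a metric space, relative compactness coincides with relative sequential compactness; therefore every sequence $\{f(t+h'_{n})\}$ of translates admits a subsequence convergent in the $\mathcal{M}_{\flat}$-norm. Combined with condition $(1)$, which holds by hypothesis, this is precisely the Bochner definition.

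For the converse Bochner $\Rightarrow$ Bohr I would argue by contradiction, adapting the classical Bochner-to-Bohr scheme. Suppose condition $(2)$ fails. Then there is an $\varepsilon_{0}>0$ such that for every $l>0$ some interval of length $l$ contains no $\varepsilon_{0}$-translation number of $f$. I would build a sequence $\{h_{n}\}$ inductively: having chosen $h_{1},\dots,h_{n}$, set $l_{n}:=2\max_{1\le i\le n}|h_{i}|+1$, select an interval $(\alpha_{n},\alpha_{n}+l_{n})$ free of $\varepsilon_{0}$-translation numbers, and take $h_{n+1}$ to be its midpoint. A short estimate shows that for each $i\le n$ the difference $h_{n+1}-h_{i}$ again lies in $(\alpha_{n},\alpha_{n}+l_{n})$ and so is not an $\varepsilon_{0}$-translation number, giving $\|f(t+(h_{n+1}-h_{i}))-f(t)\|_{\mathcal{M}_{\flat}}\ge\varepsilon_{0}$. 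Using the translation invariance of the Marcinkiewicz seminorm (the change-of-variables computation already exploited in the proof of Lemma \ref{lem33}), this upgrades to $\|f(t+h_{m})-f(t+h_{n})\|_{\mathcal{M}_{\flat}}\ge\varepsilon_{0}$ for all $m\ne n$. Consequently $\{f(t+h_{n})\}$ has no Cauchy, hence no convergent, subsequence, contradicting the Bochner definition. Therefore condition $(2)$ must hold and $f$ is Besicovitch almost periodic in Bohr's sense.

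I expect the converse direction to be the main obstacle, and within it two points deserve care. First, the inductive construction must guarantee that \emph{all} pairwise differences $h_{m}-h_{n}$ avoid the $\varepsilon_{0}$-translation numbers; this is what the growth of $l_{n}$ with the previously chosen $|h_{i}|$ together with the centering of $h_{n+1}$ is designed to ensure. Second, passing from $\|f(t+(h_{m}-h_{n}))-f(t)\|_{\mathcal{M}_{\flat}}\ge\varepsilon_{0}$ to the separation of the translates themselves requires the exact translation invariance $\|g(\cdot+\alpha)\|_{\mathcal{M}_{\flat}}=\|g\|_{\mathcal{M}_{\flat}}$, which holds because the boundary contributions $\frac{1}{2T}\int_{T}^{T+|\alpha|}\|g\|_{\mathbb{X}}^{\flat}$ and $\frac{1}{2T}\int_{-T}^{-T+|\alpha|}\|g\|_{\mathbb{X}}^{\flat}$ vanish in the $\limsup$ as $T\to+\infty$.
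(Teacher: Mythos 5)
Your proposal reproduces the paper's proof in both directions: Bohr-to-Bochner is exactly Lemma \ref{lem35} (relative compactness in the metric space $\mathcal{M}_{\flat}$ gives sequential compactness), and Bochner-to-Bohr is the same contradiction argument, with the same inductive midpoint construction forcing every difference $h_{m}-h_{n}$ to avoid the $\varepsilon_{0}$-translation numbers, followed by translation invariance of the Marcinkiewicz seminorm to separate the translates $f(\cdot+h_{n})$ from one another.

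One technical point in your closing remark is wrong, even though the fact it is meant to justify is true. You assert that $\|g(\cdot+\alpha)\|_{\mathcal{M}_{\flat}}=\|g\|_{\mathcal{M}_{\flat}}$ holds because the boundary contributions $\frac{1}{2T}\int_{T}^{T+|\alpha|}\|g(t)\|_{\mathbb{X}}^{\flat}\,dt$ and $\frac{1}{2T}\int_{-T}^{-T+|\alpha|}\|g(t)\|_{\mathbb{X}}^{\flat}\,dt$ vanish as $T\to+\infty$. They need not: take $\mathbb{X}=\mathbb{R}$ and $\|g(t)\|_{\mathbb{X}}^{\flat}=2^{n}$ on $[2^{n},2^{n}+1]$, $n\geq1$, and $0$ elsewhere; then $g\in M_{\flat}(\mathbb{R},\mathbb{X})$, yet with $\alpha=1$ and $T=2^{n}$ the first boundary term equals $\frac{1}{2}$, so its $\limsup$ is positive. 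Translation invariance nevertheless holds, for a subtler reason: for $\alpha>0$ one has the inclusions of windows $[-(T-\alpha),T-\alpha]\subset[-T+\alpha,T+\alpha]\subset[-(T+\alpha),T+\alpha]$, the middle average $\frac{1}{2T}\int_{-T+\alpha}^{T+\alpha}\|g(s)\|_{\mathbb{X}}^{\flat}\,ds$ equals $\frac{1}{2T}\int_{-T}^{T}\|g(t+\alpha)\|_{\mathbb{X}}^{\flat}\,dt$, and since $\frac{T\pm\alpha}{T}\to1$ the averages over the two symmetric windows have the same $\limsup$ as $\frac{1}{2T}\int_{-T}^{T}\|g(t)\|_{\mathbb{X}}^{\flat}\,dt$; the translated seminorm is squeezed in between. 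This renormalize-and-shrink-the-window argument is precisely what the paper's displayed computation does when it writes $\|f(t+h_{i})-f(t+h_{j})\|_{\mathcal{M}_{\flat}}$ with the factor $\frac{1}{2(T+h_{j})}$, replaces that factor by $\frac{1}{2T}$, and then passes from the window $[-T-h_{j},T+h_{j}]$ down to $[-T-h_{j},T-h_{j}]$ to recover $\|f(\cdot+h_{i}-h_{j})-f(\cdot)\|_{\mathcal{M}_{\flat}}\geq\varepsilon_{0}$. With that one repair, your argument coincides with the paper's proof.
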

\begin{proof}Let $f\in \mathcal{M}_{\flat}(\mathbb{R},\mathbb{X})$, as we have seen in Lemma 3.5, the Bohr  property of $f$ implies the relative compactness of $\mathcal{F}$, namely, the Bohr definition implies the Bochner definition. To complete the proof, we just need to show that the Bochner definition implies the Bohr definition, that is, we need to prove that
if the set $\mathcal{F}=\{f(t+h);h\in \mathbb{R}\}$  is relatively compact set in $\mathcal{M}_{\flat}(\mathbb{R},\mathbb{X})$, then for any $\varepsilon>0$, there exists $l=l(\varepsilon)>0$ such that in each interval $(a,a+l)\subset \mathbb{R}$, there is a number $\tau$ with the property
\begin{align}\label{3.2}
\|f(t+\tau)-f(t)\|_{{\mathcal{M}}_{\flat}}=\left(\limsup\limits_{T\rightarrow +\infty}\frac{1}{2T}\int_{-T}^{T}\|f(t+\tau)-f(t)\|_{\mathbb{X}}^{\flat}dt\right)^{\frac{1}{\flat}}<\varepsilon.
\end{align}
Let us assume, on the contrary, that $f$ does not possess Bohr's property. Then we can find at least one $\varepsilon_{0}>0$ for which $l=l(\varepsilon_{0})$ does not exist. In other words, for each $l>0$, one can find an interval of length $l$ that contains no points $\tau$ with Bohr's property. Now choose an arbitrary number $h_{1}\in \mathbb{R}$ and an interval $(a_{1},b_{1})\subset \mathbb{R}$ of length larger than $2|h_{1}|$ such that $(a_{1},b_{1})$ does not contain any number $\tau$ with Bohr's property. Denote $h_{2}=\frac{a_{1}+b_{1}}{2}$, then $h_{2}-h_{1}\in (a_{1},b_{1})$, and therefore, $h_{2}-h_{1}$ cannot be taken as $\tau$, which means
$$\|f(t+h_{2}-h_{1})-f(t)\|_{\mathcal{M}_{\flat}}=\left(\limsup\limits_{T\rightarrow +\infty}\frac{1}{2T}\int_{-T}^{T}\|f(t+h_{2}-h_{1})-f(t)\|_{\mathbb{X}}^{\flat}dt\right)^{\frac{1}{\flat}}\geq\varepsilon_{0}.$$
Let us take an interval $(a_{2},b_{2})\subset \mathbb{R}$ of length larger than $2(|h_{1}|+|h_{2}|)$ that does not contain any number $\tau$ with Bohr's property. We continue the process, letting $h_{3}=\frac{a_{2}+b_{2}}{2}$, then we have $h_{3}-h_{2}$, $h_{3}-h_{1}\in (a_{2},b_{2})$, and this means that $h_{3}-h_{2}$ and $h_{3}-h_{1}$ cannot be taken as number $\tau$, that is,
$$\|f(t+h_{3}-h_{1})-f(t)\|_{\mathcal{M}_{\flat}}=\left(\limsup\limits_{T\rightarrow +\infty}\frac{1}{2T}\int_{-T}^{T}\|f(t+h_{3}-h_{1})-f(t)\|_{\mathbb{X}}^{\flat}dt\right)^{\frac{1}{\flat}}\geq\varepsilon_{0}$$
and
$$\|f(t+h_{3}-h_{2})-f(t)\|_{\mathcal{M}_{\flat}}=\left(\limsup\limits_{T\rightarrow +\infty}\frac{1}{2T}\int_{-T}^{T}\|f(t+h_{3}-h_{2})-f(t)\|_{\mathbb{X}}^{\flat}dt\right)^{\frac{1}{\flat}}\geq\varepsilon_{0}.$$
Proceeding similarly, we construct the numbers $h_{4},~h_{5},\ldots,$ with the property that none of the differences $h_{i}-h_{j}$, $i>j$, could be taken as number $\tau$ in formula \eqref{3.2}. Hence, for $i>j$, we have
\begin{align*}
\|f(t+h_{i}-h_{j})-f(t)\|_{\mathcal{M}_{\flat}}&=\left(\limsup\limits_{T\rightarrow +\infty}\frac{1}{2T}\int_{-T}^{T}\|f(t+h_{i}-h_{j})-f(t)\|_{\mathbb{X}}^{\flat}dt\right)^{\frac{1}{\flat}}\\
&=\left(\limsup\limits_{T\rightarrow +\infty}\frac{1}{2T}\int_{-T-h_{j}}^{T-h_{j}}\|f(t+h_{i})-f(t+h_{j})\|_{\mathbb{X}}^{\flat}dt\right)^{\frac{1}{\flat}}\geq\varepsilon_{0}.
\end{align*}
Consequently, we have
\begin{align*}
\|f(t+h_{i})-f(t+h_{j})\|_{\mathcal{M}_{\flat}}&=\left(\limsup\limits_{T\rightarrow +\infty}\frac{1}{2(T+h_{j})}\int_{-T-h_{j}}^{T+h_{j}}\|f(t+h_{i})-f(t+h_{j})\|_{\mathbb{X}}^{\flat}dt\right)^{\frac{1}{\flat}}\\
&=\left(\limsup\limits_{T\rightarrow +\infty}\frac{1}{2T}\int_{-T-h_{j}}^{T+h_{j}}\|f(t+h_{i})-f(t+h_{j})\|_{\mathbb{X}}^{\flat}dt\right)^{\frac{1}{\flat}}\\
&\geq\left(\limsup\limits_{T\rightarrow +\infty}\frac{1}{2T}\int_{-T-h_{j}}^{T-h_{j}}\|f(t+h_{i})-f(t+h_{j})\|_{\mathbb{X}}^{\flat}dt\right)^{\frac{1}{\flat}}\geq\varepsilon_{0},
\end{align*}
which contradicts the property of relative compactness of the family $\mathcal{F}=\{f(t+h);h\in \mathbb{R}\}$. Hence, $f$ possess  Bohr's property. The proof is complete.
\end{proof}

\begin{lemma}\label{lem37} If $\{f_{n}\}_{n\in \mathbb{N}}\subset B_{AP}^{\flat}(\mathbb{R},\mathbb{X})$ such that there exists $f\in \mathcal{M}_{\flat}(\mathbb{R},\mathbb{X})$ with $\|f_{n}-f\|_{B^{\flat}}\rightarrow 0$ as $n\rightarrow\infty$, then $f\in B_{AP}^{\flat}(\mathbb{R},\mathbb{X})$.
\end{lemma}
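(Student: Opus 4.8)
The plan is to verify directly that the limit $f$ satisfies the two defining conditions of Definition \ref{defb}, namely uniform continuity in the $\mathcal{M}_{\flat}$-norm and the Bohr translation property, by means of a standard three-$\varepsilon$ argument. The only structural fact needed beyond the triangle inequality is the translation invariance of the seminorm, i.e.\ $\|g(\cdot+\alpha)\|_{\mathcal{M}_{\flat}}=\|g\|_{\mathcal{M}_{\flat}}$ for every $\alpha\in\mathbb{R}$ and $g\in\mathcal{M}_{\flat}(\mathbb{R},\mathbb{X})$. This follows exactly as in the computation carried out in the proof of Lemma \ref{lem33}, since shifting the averaging window by the fixed amount $\alpha$ alters $\frac{1}{2T}\int_{-T}^{T}$ only by boundary contributions over two intervals of length $|\alpha|$, whose weight is of order $|\alpha|/T$ and hence vanishes in the $\limsup$.

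First I would fix $\varepsilon>0$ and use the hypothesis $\|f_{n}-f\|_{B^{\flat}}\to0$ to choose an index $N$ with $\|f_{N}-f\|_{\mathcal{M}_{\flat}}<\varepsilon/3$. For the Bohr property, since $f_{N}\in B_{AP}^{\flat}(\mathbb{R},\mathbb{X})$ there is $l=l(\varepsilon/3)>0$ such that every interval of length $l$ contains a number $\tau$ with $\|f_{N}(t+\tau)-f_{N}(t)\|_{\mathcal{M}_{\flat}}<\varepsilon/3$. Inserting $f_{N}$ and applying the triangle inequality gives
\begin{align*}
\|f(t+\tau)-f(t)\|_{\mathcal{M}_{\flat}}
&\le \|f(t+\tau)-f_{N}(t+\tau)\|_{\mathcal{M}_{\flat}}
+\|f_{N}(t+\tau)-f_{N}(t)\|_{\mathcal{M}_{\flat}}\\
&\quad+\|f_{N}(t)-f(t)\|_{\mathcal{M}_{\flat}}.
\end{align*}
By translation invariance the first term equals $\|f-f_{N}\|_{\mathcal{M}_{\flat}}<\varepsilon/3$, the middle term is $<\varepsilon/3$ by the choice of $\tau$, and the last is $<\varepsilon/3$; hence the left-hand side is $<\varepsilon$, so $f$ inherits the Bohr property with the same $l$.

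Next I would treat the uniform-continuity condition identically: with the same $N$, I would invoke the uniform continuity of $f_{N}$ to obtain $\delta=\delta(\varepsilon/3)>0$ with $\|f_{N}(t+h)-f_{N}(t)\|_{\mathcal{M}_{\flat}}<\varepsilon/3$ whenever $|h|<\delta$. The same three-term splitting, again using translation invariance to rewrite $\|f(t+h)-f_{N}(t+h)\|_{\mathcal{M}_{\flat}}=\|f-f_{N}\|_{\mathcal{M}_{\flat}}$, then yields $\|f(t+h)-f(t)\|_{\mathcal{M}_{\flat}}<\varepsilon$ for all $|h|<\delta$. Thus $f\in UC\mathcal{M}_{\flat}(\mathbb{R},\mathbb{X})$ and also satisfies condition $(2)$, so $f\in B_{AP}^{\flat}(\mathbb{R},\mathbb{X})$, which is the desired closedness.

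I do not expect any serious obstacle: the argument is the Besicovitch analogue of the classical fact that a uniform limit of Bohr almost periodic functions is almost periodic, with the supremum norm replaced by the seminorm $\|\cdot\|_{\mathcal{M}_{\flat}}$. The one point deserving care is making the translation invariance of the seminorm fully rigorous, i.e.\ controlling the boundary terms inside the $\limsup$, but this is precisely the estimate already performed in Lemma \ref{lem33} and may simply be invoked.
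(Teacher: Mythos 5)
Your proposal is correct and follows essentially the same argument as the paper: pick $N$ with $\|f_{N}-f\|_{B^{\flat}}$ small, borrow the $\varepsilon$-translation numbers of $f_{N}$, and conclude via the three-term triangle inequality. In fact you are slightly more careful than the paper, which dismisses the uniform-continuity condition as ``obvious'' and uses the translation invariance of the seminorm $\|\cdot\|_{\mathcal{M}_{\flat}}$ implicitly, whereas you verify both points explicitly by the same scheme.
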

\begin{proof}Obviously, $f$ is uniformly continuous in the norm $\|\cdot\|_{B^{\flat}}$. Since $\{f_{n}\}_{n\in \mathbb{N}}\subset B_{AP}^{\flat}(\mathbb{R},\mathbb{X})$ such that $\|f_{n}-f\|_{B^{\flat}}\rightarrow 0$ as $n\rightarrow\infty$, then for any $\varepsilon>0$, there corresponds a large enough positive integer $N_{1}=N_{1}(\varepsilon)$ such that $$\|f_{N_{1}}-f\|_{B^{\flat}}<\varepsilon.$$ 
Noting that $f_{N_{1}}\in B_{AP}^{\flat}(\mathbb{R},\mathbb{X})$, there exists $l=l(\varepsilon)$ such that every interval of length $l$ contains a number $\tau$ with the property $$\|f_{N_{1}}(t+\tau)-f_{N_{1}}(t)\|_{B^{\flat}}<\varepsilon.$$
And we keep in mind the inequality
\begin{equation*}
 \begin{split}
 &\quad \ \|f (t+\tau)-f(t)\|_{B^{\flat}}\\
 &\leq\|f (t+\tau)-f_{N_{1}}(t+\tau)\|_{B^{\flat}}+ \|f_{N_{1}} (t+\tau)-f_{N_{1}}(t)\|_{B^{\flat}}+\|f_{N_{1}} (t)-f(t)\|_{B^{\flat}}\\
 &<\varepsilon+\varepsilon+\varepsilon=3\varepsilon,
 \end{split}
 \end{equation*}
 which yields $f\in B_{AP}^{\flat}(\mathbb{R},\mathbb{X})$. The proof is complete.
\end{proof}

Similar to the proof of Proposition 3.21 in \cite{4}, one can prove
\begin{lemma}\label{lem39} Let $f_{k}\in B_{AP}^{\flat}(\mathbb{R},\mathbb{X})$, $k=1,2,\ldots,n$. Then, for every $\varepsilon>0$, there exist common $\varepsilon$-translation numbers for these functions.
\end{lemma}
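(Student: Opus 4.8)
The plan is to reduce the statement to the Bochner characterization established in Lemma \ref{lem36}, applied not to the individual $f_{k}$ but to the single vector-valued function they assemble into. First I would introduce the product Banach space $\mathbb{X}^{n}$ equipped with the norm $\|(x_{1},\ldots,x_{n})\|_{\mathbb{X}^{n}}=\max_{1\leq k\leq n}\|x_{k}\|_{\mathbb{X}}$, and define $g:\mathbb{R}\rightarrow\mathbb{X}^{n}$ by $g(t)=(f_{1}(t),\ldots,f_{n}(t))$. Since $\|x_{k}\|_{\mathbb{X}}\leq\|(x_{1},\ldots,x_{n})\|_{\mathbb{X}^{n}}$ pointwise, one checks using the identity $\|x+L_{\flat}\|_{\mathcal{M}_{\flat}}=\|x\|_{M_{\flat}}$ that $\|f_{k}(\cdot+\tau)-f_{k}\|_{\mathcal{M}_{\flat}}\leq\|g(\cdot+\tau)-g\|_{\mathcal{M}_{\flat}}$ for every $\tau$ and every $k$. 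Consequently, any $\varepsilon$-translation number of $g$ is automatically a common $\varepsilon$-translation number of $f_{1},\ldots,f_{n}$, so it suffices to prove $g\in B_{AP}^{\flat}(\mathbb{R},\mathbb{X}^{n})$.

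Next I would verify that $g$ satisfies the Bochner definition in $\mathbb{X}^{n}$. Uniform continuity of $g$ in the $\mathcal{M}_{\flat}$-norm is immediate from the uniform continuity of each $f_{k}$ together with the same norm comparison. For the sequential compactness, given an arbitrary sequence $\{h'_{m}\}_{m\in\mathbb{N}}$ of reals, I would perform a finite diagonal extraction: since $f_{1}\in B_{AP}^{\flat}(\mathbb{R},\mathbb{X})$ satisfies the Bochner property, pass to a subsequence along which $f_{1}(\cdot+h_{m})$ converges in $\mathcal{M}_{\flat}$; from that subsequence extract a further one making $f_{2}(\cdot+h_{m})$ converge, and so on up to $f_{n}$. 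Because $n$ is finite, the final subsequence $\{h_{m}\}$ makes every $f_{k}(\cdot+h_{m})$ converge simultaneously, and hence $g(\cdot+h_{m})$ converges in the product norm. Thus $g$ obeys the Bochner definition.

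Finally I would invoke the equivalence of the two definitions. The proof of Lemma \ref{lem36} uses only the abstract Banach-space structure of the target, so it applies with $\mathbb{X}$ replaced by $\mathbb{X}^{n}$; hence the Bochner property of $g$ yields its Bohr property, i.e. $g\in B_{AP}^{\flat}(\mathbb{R},\mathbb{X}^{n})$. By the reduction of the first paragraph, the $\varepsilon$-translation numbers of $g$ are common $\varepsilon$-translation numbers of $f_{1},\ldots,f_{n}$, completing the argument.

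I expect the main obstacle to be conceptual rather than computational: it is the observation that a naive Bohr-only induction fails, since the intersection of finitely many relatively dense sets of translation numbers need not remain relatively dense. The whole point is to avoid intersecting translation sets directly and instead transport the problem to the product space, where the Bochner characterization supplies simultaneous convergence for free via the finite diagonal extraction. The one technical point requiring care is confirming that the Marcinkiewicz-space machinery—the quotient construction, the norm identity $\|x+L_{\flat}\|_{\mathcal{M}_{\flat}}=\|x\|_{M_{\flat}}$, and Lemma \ref{lem36}—is genuinely insensitive to the choice of Banach target, which I would note explicitly when applying it to $\mathbb{X}^{n}$.
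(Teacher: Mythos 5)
Your proof is correct, and it is essentially the argument the paper itself points to: the paper offers no proof of Lemma \ref{lem39}, only the remark that it goes ``similar to the proof of Proposition 3.21 in \cite{4}'', and that proof is exactly your reduction --- assemble $f_{1},\ldots,f_{n}$ into a single function valued in a product Banach space, obtain its Bochner property by finitely many successive subsequence extractions, and pass back to the Bohr property via the equivalence of Lemma \ref{lem36}, whose proof is indeed insensitive to the choice of target space. One wording slip worth fixing: the uniform continuity of $g$ does not follow from the comparison $\|f_{k}(\cdot+h)-f_{k}\|_{\mathcal{M}_{\flat}}\le\|g(\cdot+h)-g\|_{\mathcal{M}_{\flat}}$ (that inequality goes the wrong way), but from the reverse estimate $\|g(\cdot+h)-g\|_{\mathcal{M}_{\flat}}^{\flat}\le\sum_{k=1}^{n}\|f_{k}(\cdot+h)-f_{k}\|_{\mathcal{M}_{\flat}}^{\flat}$, which is immediate for the max norm and costs only a harmless factor $n^{1/\flat}$.
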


\begin{lemma}\label{lem310} If $f$, $g\in B_{AP}^{\flat}(\mathbb{R},\mathbb{X})$, then we have $f+g\in B_{AP}^{\flat}(\mathbb{R},\mathbb{X})$.
\end{lemma}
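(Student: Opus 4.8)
The plan is to verify directly that $f+g$ satisfies the two defining conditions of Bohr's definition (Definition \ref{defb}), exploiting the fact that $\mathcal{M}_{\flat}(\mathbb{R},\mathbb{X})$ is a genuine Banach space (Lemma 3.2), so that $\|\cdot\|_{\mathcal{M}_{\flat}}=\|\cdot\|_{B^{\flat}}$ obeys the triangle inequality, together with Lemma \ref{lem39} on common translation numbers. First I would record that since $f,g\in B_{AP}^{\flat}(\mathbb{R},\mathbb{X})$, both functions individually satisfy conditions $(1)$ and $(2)$, and that translation acts linearly on representatives, so $(f+g)(t+h)-(f+g)(t)=\big(f(t+h)-f(t)\big)+\big(g(t+h)-g(t)\big)$ as elements of $\mathcal{M}_{\flat}$.

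For condition $(1)$, uniform continuity in the $B^{\flat}$-norm, the argument is a routine $\varepsilon/2$ split: given $\varepsilon>0$, choose $\delta_{1},\delta_{2}>0$ furnished by the uniform continuity of $f$ and of $g$ respectively for the tolerance $\varepsilon/2$, set $\delta=\min\{\delta_{1},\delta_{2}\}$, and for $|h|<\delta$ estimate
\[
\|(f+g)(t+h)-(f+g)(t)\|_{B^{\flat}}\leq\|f(t+h)-f(t)\|_{B^{\flat}}+\|g(t+h)-g(t)\|_{B^{\flat}}<\frac{\varepsilon}{2}+\frac{\varepsilon}{2}=\varepsilon.
\]

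For condition $(2)$, which is the crux of the argument, I would invoke Lemma \ref{lem39} with $n=2$: for any $\varepsilon>0$ there is a length $l=l(\varepsilon)>0$ such that every interval of length $l$ contains a single number $\tau$ that is simultaneously an $(\varepsilon/2)$-translation number of both $f$ and $g$, i.e. $\|f(t+\tau)-f(t)\|_{B^{\flat}}<\varepsilon/2$ and $\|g(t+\tau)-g(t)\|_{B^{\flat}}<\varepsilon/2$. The triangle inequality then yields
\[
\|(f+g)(t+\tau)-(f+g)(t)\|_{B^{\flat}}\leq\|f(t+\tau)-f(t)\|_{B^{\flat}}+\|g(t+\tau)-g(t)\|_{B^{\flat}}<\varepsilon,
\]
so $\tau$ is an $\varepsilon$-translation number of $f+g$ and condition $(2)$ holds. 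Since both conditions are verified, $f+g\in B_{AP}^{\flat}(\mathbb{R},\mathbb{X})$.

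The only genuinely nontrivial ingredient is the existence of a \emph{common} translation number for $f$ and $g$; without Lemma \ref{lem39} one would only obtain possibly distinct numbers $\tau_{f}$ and $\tau_{g}$ in a given interval, for which the triangle inequality gives no control over $f+g$. Thus the entire weight of the proof rests on Lemma \ref{lem39}, and once that is available the verification reduces to two applications of the triangle inequality in the Banach space $\mathcal{M}_{\flat}$; no further difficulty is anticipated.
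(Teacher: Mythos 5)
Your proof is correct, and it rests on exactly the ingredient the paper intends: Lemma \ref{lem39} furnishing a common $\varepsilon$-translation number for $f$ and $g$, followed by the triangle inequality in $\mathcal{M}_{\flat}$. Interestingly, your write-up is more faithful to the statement than the paper's own proof. The paper does begin by invoking Lemma \ref{lem39}, but the displayed estimate that follows bounds $\|f(t+\tau)g(t+\tau)-f(t)g(t)\|_{B^{\flat}}$, i.e.\ the \emph{product} rather than the sum (apparently transplanted from the argument for Lemma \ref{lem311}), and then jumps to the conclusion for $f+g$. That displayed estimate is itself dubious, since it uses $\|u\,v\|_{B^{\flat}}\leq\|u\|_{B^{\flat}}\|v\|_{B^{\flat}}$, a submultiplicativity the Marcinkiewicz seminorm does not enjoy --- which is precisely why the paper's own remark preceding Lemma \ref{lem311} warns that the product of two Besicovitch almost periodic functions need not be Besicovitch almost periodic. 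Your $\varepsilon/2$-splitting via the triangle inequality is the estimate the proof should contain. You also verify condition $(1)$ of Definition \ref{defb} (uniform continuity in the $\mathcal{M}_{\flat}$-norm), which the paper's proof omits entirely but which is required for membership in $B_{AP}^{\flat}(\mathbb{R},\mathbb{X})$; and your closing observation that the whole weight of the argument lies in Lemma \ref{lem39} --- separate translation numbers $\tau_{f}$, $\tau_{g}$ in the same interval would give no control over the sum --- is exactly right.
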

\begin{proof}According to Lemma \ref{lem39}, for any $\varepsilon>0$,  there exists $l=l(\varepsilon)>0$ such that every interval of length $l$ contains a $\tau$ with the property
$$\|f(t+\tau)-f(t)\|_{B^{\flat}}<\varepsilon,~\|g(t+\tau)-g(t)\|_{B^{\flat}}<\varepsilon.$$
Hence, we have
\begin{align*}
&\quad \ \|f(t+\tau)g(t+\tau)-f(t)g(t)\|_{B^{\flat}}\\
&\leq\|f(t+\tau)g(t+\tau)-f(t)g(t+\tau)\|_{B^{\flat}}+\|f(t)g(t+\tau)-f(t)g(t)\|_{B^{\flat}}\\
&\leq\|f(t+\tau)-f(t)\|_{B^{\flat}}\|g(t+\tau)\|_{B^{\flat}}+\|f(t)\|_{B^{\flat}}\|g(t+\tau)-g(t)\|_{B^{\flat}}\\
&\leq\varepsilon\|g(t+\tau)\|_{B^{\flat}}+\varepsilon\|f(t)\|_{B^{\flat}},
\end{align*}
therefore, $f+g\in B_{AP}^{\flat}(\mathbb{R},\mathbb{X})$. The proof is complete.
\end{proof}

\begin{lemma}\label{lem38} The space $B_{AP}^{\flat}(\mathbb{R},\mathbb{X})$ is a Banach space with the norm $\|\cdot\|_{B^{\flat}}$.
\end{lemma}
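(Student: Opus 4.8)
The plan is to realize $B_{AP}^{\flat}(\mathbb{R},\mathbb{X})$ as a closed linear subspace of the Marcinkiewicz space $\mathcal{M}_{\flat}(\mathbb{R},\mathbb{X})$, which is already known to be a Banach space, and then invoke the standard fact that a closed subspace of a complete space is itself complete. Since by definition $\|f\|_{B^{\flat}}=\|f\|_{\mathcal{M}_{\flat}}$ is the restriction to $B_{AP}^{\flat}$ of the genuine norm $\|\cdot\|_{\mathcal{M}_{\flat}}$, the only substantive tasks are to verify the subspace structure and the closedness; all the required ingredients have been assembled in the preceding lemmas.

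First I would check that $B_{AP}^{\flat}(\mathbb{R},\mathbb{X})$ is a linear subspace of $\mathcal{M}_{\flat}(\mathbb{R},\mathbb{X})$. Closure under scalar multiplication is precisely Lemma \ref{lem34}, while closure under addition is Lemma \ref{lem310}; together with the trivial observation that the zero class belongs to $B_{AP}^{\flat}$, these establish that $B_{AP}^{\flat}$ is a subspace. It then follows that $\|\cdot\|_{B^{\flat}}$ is a bona fide norm on $B_{AP}^{\flat}$, inherited from $\mathcal{M}_{\flat}$.

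Next I would prove completeness by showing that $B_{AP}^{\flat}$ is closed in $\mathcal{M}_{\flat}$. Let $\{f_{n}\}\subset B_{AP}^{\flat}(\mathbb{R},\mathbb{X})$ be a Cauchy sequence in the $\|\cdot\|_{B^{\flat}}$-norm. Since $\mathcal{M}_{\flat}(\mathbb{R},\mathbb{X})$ is complete, there exists $f\in \mathcal{M}_{\flat}(\mathbb{R},\mathbb{X})$ with $\|f_{n}-f\|_{B^{\flat}}\to 0$ as $n\to\infty$. Lemma \ref{lem37} then guarantees that the limit $f$ again belongs to $B_{AP}^{\flat}(\mathbb{R},\mathbb{X})$. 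Hence every Cauchy sequence in $B_{AP}^{\flat}$ converges to an element of $B_{AP}^{\flat}$, which is exactly the completeness needed.

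I do not anticipate a genuine obstacle: this statement is an assembly of the earlier results on scalar multiplication, additivity, and limit-closure, combined with the completeness of the ambient Marcinkiewicz space. The only point requiring mild care is the bookkeeping with equivalence classes — as noted in the earlier remarks, elements of $B_{AP}^{\flat}$ are cosets modulo $L_{\flat}$, and the identity $\|x+L_{\flat}\|_{\mathcal{M}_{\flat}}=\|x\|_{{M}_{\flat}}$ ensures that the passage from the seminorm on $M_{\flat}$ to the norm on $\mathcal{M}_{\flat}$ is harmless throughout.
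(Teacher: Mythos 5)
Your proof is correct and follows essentially the same route as the paper's: both realize $B_{AP}^{\flat}(\mathbb{R},\mathbb{X})$ as a closed subset of the Banach space $\mathcal{M}_{\flat}(\mathbb{R},\mathbb{X})$, with Lemma \ref{lem37} supplying the closedness, and conclude completeness from that of the ambient space. The only difference is that you explicitly verify the linear-subspace structure via Lemmas \ref{lem34} and \ref{lem310}, a point the paper dismisses with ``clearly.''
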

\begin{proof} Clearly, $B_{AP}^{\flat}(\mathbb{R},\mathbb{X})\subset \mathcal{M}_{\flat}(\mathbb{R},\mathbb{X})$ and by Lemma 3.7, the space $B_{AP}^{\flat}(\mathbb{R},\mathbb{X})$ is closed. Therefore, $B_{AP}^{\flat}(\mathbb{R},\mathbb{X})$ is also a Banach space. The proof is complete.
\end{proof}

\begin{remark} Unlike in the case of Bohr almost periodic functions, the product $f\cdot g$ does not necessarily belong to $B_{AP}^{\flat}(\mathbb{R},\mathbb{X})$. But we replace a Besicovitch almost periodic function with a Bohr almost periodic function, the result is still valid.
\end{remark}
\begin{lemma}\label{lem311} If $f\in AP(\mathbb{R},\mathbb{X})$ and $g\in B_{AP}^{\flat}(\mathbb{R},\mathbb{X})$, then we have $f \cdot g \in B_{AP}^{\flat}(\mathbb{R},\mathbb{X})$.
\end{lemma}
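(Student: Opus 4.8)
The plan is to show that the product $h:=f\cdot g$ belongs to $\mathcal{M}_{\flat}(\mathbb{R},\mathbb{X})$ and then verify the two requirements of Definition \ref{defb}. The argument rests on two facts. First, by Lemma 2.1 the function $f\in AP(\mathbb{R},\mathbb{X})$ is bounded, so $M_{f}:=\sup_{t}\|f(t)\|_{\mathbb{X}}<\infty$, and $f$ is uniformly continuous in the sup-norm. Second, the product on $\mathbb{X}$ is submultiplicative, $\|xy\|_{\mathbb{X}}\le\|x\|_{\mathbb{X}}\|y\|_{\mathbb{X}}$. Membership is then immediate: since $\|f(t)g(t)\|_{\mathbb{X}}\le M_{f}\|g(t)\|_{\mathbb{X}}$ pointwise, taking Marcinkiewicz means gives $\|fg\|_{M_{\flat}}\le M_{f}\|g\|_{M_{\flat}}<\infty$, so $fg\in\mathcal{M}_{\flat}(\mathbb{R},\mathbb{X})$.

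The core of the proof is a telescoping estimate. For any $\tau\in\mathbb{R}$ I would use the splitting
\[ h(t+\tau)-h(t)=\big(f(t+\tau)-f(t)\big)g(t+\tau)+f(t)\big(g(t+\tau)-g(t)\big). \]
Applying submultiplicativity inside the Marcinkiewicz mean bounds the first summand by $\|f(\cdot+\tau)-f\|_{\infty}\,\|g(\cdot+\tau)\|_{M_{\flat}}$ and the second by $M_{f}\,\|g(\cdot+\tau)-g\|_{M_{\flat}}$. Here I would invoke the translation invariance of the $M_{\flat}$-seminorm (the same change of variables used in Lemma \ref{lem33}) to replace $\|g(\cdot+\tau)\|_{M_{\flat}}$ by $\|g\|_{M_{\flat}}$, obtaining
\[ \|h(\cdot+\tau)-h\|_{\mathcal{M}_{\flat}}\le \|f(\cdot+\tau)-f\|_{\infty}\,\|g\|_{M_{\flat}}+M_{f}\,\|g(\cdot+\tau)-g\|_{\mathcal{M}_{\flat}}. \]
Both defining properties of $B_{AP}^{\flat}$ follow once the two factors $\|f(\cdot+\tau)-f\|_{\infty}$ and $\|g(\cdot+\tau)-g\|_{\mathcal{M}_{\flat}}$ are made simultaneously small. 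Condition (1) (uniform continuity in $\mathcal{M}_{\flat}$) is then immediate: for $\tau=h$ with $|h|<\delta$ the first factor is small by uniform continuity of $f$ (Lemma 2.1) and the second by condition (1) in the definition of $g\in B_{AP}^{\flat}$.

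The remaining, and hardest, point is condition (2): one needs, in each interval of length $l$, a \emph{single} number $\tau$ that is at once a sup-norm $\varepsilon$-translation number of $f$ and an $\mathcal{M}_{\flat}$-norm $\varepsilon$-translation number of $g$. The $\mathcal{M}_{\flat}$-translation numbers furnished by Lemma \ref{lem39} are too weak for the first factor, which genuinely requires the uniform smallness $\|f(\cdot+\tau)-f\|_{\infty}<\varepsilon$; this is precisely why the hypothesis $f\in AP(\mathbb{R},\mathbb{X})$, rather than merely $f\in B_{AP}^{\flat}(\mathbb{R},\mathbb{X})$, is indispensable, as the preceding Remark anticipates. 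I would establish the relative density of these common translation numbers by repeating the Bochner-type scheme of Lemma \ref{lem36}: given any real sequence, Theorem 2.1 extracts a subsequence along which $f(\cdot+h_{n})$ converges uniformly, and Lemma \ref{lem36} extracts a further subsequence along which $g(\cdot+h_{n})$ converges in $\mathcal{M}_{\flat}$; this simultaneous relative compactness yields, via the contradiction argument of Lemma \ref{lem36}, a relatively dense set of common $\varepsilon$-translation numbers. Feeding such a $\tau$ into the displayed estimate gives $\|h(\cdot+\tau)-h\|_{\mathcal{M}_{\flat}}<\varepsilon\big(\|g\|_{M_{\flat}}+M_{f}\big)$, and since $\|g\|_{M_{\flat}}$ and $M_{f}$ are fixed constants, rescaling $\varepsilon$ completes the verification that $f\cdot g\in B_{AP}^{\flat}(\mathbb{R},\mathbb{X})$.
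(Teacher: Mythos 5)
Your proof is correct, and its ingredients coincide with the paper's: boundedness and uniform continuity of $f$ in the sup-norm (Lemma 2.1), the Bochner property of $f$ (Theorem 2.1), the Bochner characterization of $g$ (Lemma \ref{lem36}), and the same telescoping estimate (modulo which factor is held fixed) in which the $f$-difference is measured in $\|\cdot\|_{\infty}$ and the $g$-difference in $\|\cdot\|_{\mathcal{M}_{\flat}}$ --- both arguments also tacitly assume the product in $\mathbb{X}$ is defined and submultiplicative. Where you genuinely diverge is the endgame. The paper never returns to translation numbers: it verifies the \emph{Bochner} definition of $f\cdot g$ directly, i.e., from an arbitrary real sequence it extracts one subsequence $\{h_{2n}\}$ along which $f(\cdot+h_{2n})$ converges uniformly and $g(\cdot+h_{2n})$ converges in $\mathcal{M}_{\flat}$, feeds this into the telescoping estimate to conclude that $\{f(\cdot+h_{2n})\,g(\cdot+h_{2n})\}$ converges in $\mathcal{M}_{\flat}$, and is done by the equivalence of the two definitions. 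You instead verify the \emph{Bohr} definition, which forces you to produce a relatively dense set of $\tau$'s that are simultaneously sup-norm translation numbers of $f$ and $\mathcal{M}_{\flat}$-translation numbers of $g$; you rightly flag this as the hardest step and propose a mixed-norm rerun of the contradiction argument of Lemma \ref{lem36}. That adaptation does go through (the sup-norm is exactly translation invariant, and the $\limsup$ inequality used in Lemma \ref{lem36} handles the $\mathcal{M}_{\flat}$ component), but it is precisely the work the paper's route makes unnecessary: converting joint relative compactness back into common translation numbers and then re-applying the telescoping estimate is a detour. What the detour buys is a stronger intermediate fact of independent interest (relative density of common mixed-norm translation numbers) and, incidentally, a write-up cleaner than the paper's own, which compares $f(t+h_{2n})g(t+h_{2n})$ against $f(t)g(t)$ rather than establishing Cauchyness of the sequence of translates, and which disposes of $f\cdot g\in\mathcal{M}_{\flat}$ and of uniform continuity with a bare ``clearly''.
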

\begin{proof}Clearly, we have $f\cdot g\in \mathcal{M}_{\flat}$ and $f\cdot g$ is uniformly continuous in the norm $\|\cdot\|_{B^{\flat}}$. Since $f\in AP(\mathbb{R},\mathbb{X})$, for any sequence $\{h_{n}\}_{n\in \mathbb{N}}$ of real numbers there exists a subsequence $\{h_{1n}\}_{n\in \mathbb{N}}$ of $\{h_{n}\}_{n\in \mathbb{N}}$ such that $\{f(t+h_{1n});n\in \mathbb{N}\}$ converges uniformly on $\mathbb{R}$. Since $g\in B^{\flat}(\mathbb{R},\mathbb{X})$ it follows that for the sequence $\{g(t+h_{1n});n\in \mathbb{N}\}$, there exists a subsequence $\{h_{2n}\}_{n\in \mathbb{N}}\subset\{h_{1n}\}_{n\in \mathbb{N}}$ such that the sequence of functions $\{g(t+h_{2n});n\in \mathbb{N}\}$ converges in the norm $\|\cdot\|_{B^{\flat}}$. It is then clear that $\{f(t+h_{2n});n\in \mathbb{N}\}$ converges uniformly in $t\in \mathbb{R}$.\par
Now for any $\varepsilon>0$, there exists a large enough integer $N=N(\varepsilon)$ such that
 \begin{equation*}
 \begin{split}
 &\quad \ \|f (t+h_{2n})g(t+h_{2n})-f(t)g(t)\|_{B^{\flat}}\\
 &\leq\|f (t+h_{2n})g(t+h_{2n})-f(t+h_{2n})g(t)\|_{B^{\flat}}+\|f (t+h_{2n})g(t)-f(t)g(t)\|_{B^{\flat}}\\
 &\leq \|f(t+h_{2n})\|_{B^{\flat}}\|g(t+h_{2n})- g(t)\|_{B^{\flat}}+\|f(t+h_{2n})- f(t)\|_{B^{\flat}}\|g(t)\|_{B^{\flat}}\\
 &\leq\|f(t+h_{2n})\|_{AP}\|g(t+h_{2n})- g(t)\|_{B^{\flat}}+\|f(t+h_{2n})- f(t)\|_{AP}\|g(t)\|_{B^{\flat}}\\
&<\|f\|_{AP}\varepsilon+\varepsilon\|g(t)\|_{B^{\flat}},
 \end{split}
 \end{equation*}
for $n\geq N$, which means that $\{f(t+h_{2n})g(t+h_{2n});n\in \mathbb{N}\}$ converges in the norm $\|\cdot\|_{B^{\flat}}$. Hence, $f\cdot g$ satisfies the Bochner definition, namely, $f\cdot g \in B_{AP}^{\flat}(\mathbb{R},\mathbb{X})$. The proof is complete.
 \end{proof}

 \begin{lemma}\label{lem312}If $x\in AP(\mathbb{R},\mathbb{R})$ and $f\in B_{AP}^{\flat}(\mathbb{R},\mathbb{X})$, then $f(\cdot-x(\cdot))\in B_{AP}^{\flat}(\mathbb{R},\mathbb{X})$.
\end{lemma}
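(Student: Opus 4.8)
The plan is to verify membership in $B_{AP}^{\flat}(\mathbb{R},\mathbb{X})$ through the Bochner characterization of Lemma \ref{lem36}, since this fits the machinery already assembled (the Bochner property of Bohr almost periodic functions, the relative compactness of Lemma \ref{lem35}, and the closedness of Lemma \ref{lem37}). Write $g:=f(\cdot-x(\cdot))$. Because $x\in AP(\mathbb{R},\mathbb{R})$ is bounded, say $|x(t)|\le M$ for all $t$, the map $\Phi(t)=t-x(t)$ carries $[-T,T]$ into $[-T-M,T+M]$. First I would check that $g\in\mathcal{M}_{\flat}(\mathbb{R},\mathbb{X})$ and that its class is well defined: granting the change-of-variables estimate discussed below, $\frac{1}{2T}\int_{-T}^{T}\|f(t-x(t))\|_{\mathbb{X}}^{\flat}\,dt$ is comparable to a mean of $\|f\|_{\mathbb{X}}^{\flat}$ over the slightly larger window $[-T-M,T+M]$, the length-correction factor tending to $1$ as $T\to+\infty$; the same bound applied to a representative from $L_{\flat}$ shows that replacing $f$ by an equivalent function does not change the class of $g$.

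Next I would establish the Bochner property. Given an arbitrary sequence $\{h'_{n}\}_{n\in\mathbb{N}}$, I would pass first to a subsequence $\{h_{n}\}$ along which $x(\cdot+h_{n})\to y$ uniformly on $\mathbb{R}$ (relative compactness of the translates of the Bohr almost periodic function $x$), with $y\in AP(\mathbb{R},\mathbb{R})$, and then refine once more, using Lemma \ref{lem35}, so that $f(\cdot+h_{n})$ converges in the $\mathcal{M}_{\flat}$-norm. The aim is to show $g(\cdot+h_{n})=f(\cdot+h_{n}-x(\cdot+h_{n}))$ is Cauchy in $\mathcal{M}_{\flat}$, from which convergence follows by completeness of the Marcinkiewicz space, and the limit lies in $B_{AP}^{\flat}(\mathbb{R},\mathbb{X})$ by Lemma \ref{lem37}. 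The control I would use is the three-term splitting
\begin{align*}
g(t+h_{n})-g(t+h_{m})&=\big[f(t+h_{n}-x(t+h_{n}))-f(t+h_{n}-y(t))\big]\\
&\quad+\big[f(t+h_{n}-y(t))-f(t+h_{m}-y(t))\big]\\
&\quad+\big[f(t+h_{m}-y(t))-f(t+h_{m}-x(t+h_{m}))\big].
\end{align*}
The middle bracket is $\big(f(\cdot+h_{n})-f(\cdot+h_{m})\big)$ evaluated at the argument $t-y(t)$, while the two outer brackets compare $f$ at arguments differing by the uniformly small quantities $y(t)-x(t+h_{n})$ and $y(t)-x(t+h_{m})$.

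The \emph{main obstacle} is exactly the step that turns these pointwise facts into $\mathcal{M}_{\flat}$-smallness, and it has two faces, both governed by a substitution of the type $\Phi(t)=t-\psi(t)$ with $\psi\in AP(\mathbb{R},\mathbb{R})$ bounded. The first is a change-of-variables bound $\|h(\cdot-y(\cdot))\|_{\mathcal{M}_{\flat}}\le C\,\|h\|_{\mathcal{M}_{\flat}}$ (applied to $h=f(\cdot+h_{n})-f(\cdot+h_{m})$), which lets the middle term inherit the convergence of the translates of $f$; the second is a variable-small-shift continuity, $\|f(\cdot+\sigma(\cdot))-f(\cdot)\|_{\mathcal{M}_{\flat}}\to0$ as $\|\sigma\|_{\infty}\to0$, needed for the outer terms and also to check clause $(1)$ of Definition \ref{defb} (uniform $\mathcal{M}_{\flat}$-continuity of $g$). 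Neither is a direct consequence of the constant-shift uniform continuity of $f$, precisely because here the shift depends on $t$.

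I would attack both faces through the change of variables $s=t-\psi(t)$: boundedness of $\psi$ confines the substitution to a window of width $2M$ and renders the interval-length correction harmless in the limit $T\to+\infty$, while uniform continuity of $\psi$ is what one uses to control the local distortion. For the variable-small-shift estimate I would additionally exploit that, as recalled in the introduction, $B_{AP}^{\flat}\subset B^{\flat}$, so $f$ is an $\mathcal{M}_{\flat}$-limit of trigonometric polynomials $P(t)=\sum_{j}a_{j}e^{i\lambda_{j}t}$, for which $\|P(\cdot+\sigma)-P(\cdot)\|_{\mathcal{M}_{\flat}}\le\|\sigma\|_{\infty}\sum_{j}|a_{j}||\lambda_{j}|$ is immediate (note each $e^{i\lambda_{j}(t-x(t))}=e^{i\lambda_{j}t}e^{-i\lambda_{j}x(t)}$ is itself a product of Bohr almost periodic functions, hence in $B_{AP}^{\flat}$, which also gives a clean alternative route via Lemma \ref{lem37}); one then transfers the estimate to $f$ using the composition bound. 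The genuinely delicate point is establishing that composition bound for a merely almost periodic, possibly non-smooth, $x$: it is transparent when $x$ is Lipschitz with constant strictly less than $1$ (so $\Phi$ is a bi-Lipschitz homeomorphism), and the general case requires a careful multiplicity/covering argument to keep the pushforward of Lebesgue measure under $\Phi$ under control.
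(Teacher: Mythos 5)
Your skeleton coincides with the paper's: the paper also proves this lemma through the Bochner characterization (Lemma \ref{lem36}), extracting from an arbitrary sequence a subsequence along which the translates of $f$ are Cauchy in the $\mathcal{M}_{\flat}$-norm and the translates of $x$ are uniformly Cauchy, and then splitting the difference $f(t+h_{1q}-x(t+h_{1q}))-f(t+h_{1q'}-x(t+h_{1q'}))$ (the paper uses a two-term splitting with the common variable shift $x(t+h_{1q})$, you use a three-term splitting through the uniform limit $y$; that difference is immaterial). Where you and the paper part ways is at what you call the \emph{main obstacle}, and you are right that it is the main obstacle: every quantity to be estimated has the form $\|F(\cdot-\psi(\cdot))\|_{\mathcal{M}_{\flat}}$ with a $t$-dependent shift $\psi$, and neither the Cauchy property of the translates $f(\cdot+h_{n})$ nor clause $(1)$ of Definition \ref{defb} applies to such expressions, because the $\mathcal{M}_{\flat}$-seminorm is a mean over $t$ and a $t$-dependent shift is not a translation. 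The paper's proof applies the constant-shift estimates to these variable-shift quantities without comment; on this point your write-up is more candid than the paper's.

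However, your proposal does not close the gap; it only names it. Both missing estimates --- the operator bound $\|h(\cdot-y(\cdot))\|_{\mathcal{M}_{\flat}}\le C\|h\|_{\mathcal{M}_{\flat}}$ and the variable-small-shift continuity --- are deferred to ``a careful multiplicity/covering argument'' that is never given, and in fact no such argument can exist at this level of generality. Let $y$ be the continuous $P$-periodic function (hence $y\in AP(\mathbb{R},\mathbb{R})$) with $y(t)=t$ on $[0,1]$, interpolated back to $0$ on $[1,P]$; then $\Phi(t)=t-y(t)$ collapses each interval $[kP,kP+1]$ to the single point $kP$. Taking $h$ equal to $A$ on $\bigcup_{k}[kP-\epsilon,kP+\epsilon]$ and $0$ elsewhere gives $\|h\|_{M_{\flat}}^{\flat}\approx 2\epsilon A^{\flat}/P$ while $\|h\circ\Phi\|_{M_{\flat}}^{\flat}\ge A^{\flat}/P$, so the ratio blows up as $\epsilon\to0$ and no constant $C=C(y)$ exists; taking instead $h=0$ a.e.\ but $h(kP)=A$ shows the composition does not even descend to equivalence classes, since it sends the zero class of $\mathcal{M}_{\flat}$ to a nonzero class. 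Thus the pushforward of Lebesgue measure under $\Phi$ is genuinely uncontrolled for a merely continuous almost periodic shift: your bound is provable in the bi-Lipschitz case you mention ($x$ Lipschitz with constant $<1$), but the general case needs either extra hypotheses on $x$ or a representative of $f$ with more regularity than measurability, neither of which is available. So, as a proof, the proposal is incomplete at exactly the step that matters --- and, to be fair, the paper's own argument has the same unacknowledged hole.
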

\begin{proof}
Step 1. We will prove $f(\cdot-x(\cdot))\in \mathcal{M}_{\flat}(\mathbb{R},\mathbb{X})$ is uniformly continuous in the norm $\|\cdot\|_{B^{\flat}}$.\par
Since $f\in B_{AP}^{\flat}(\mathbb{R},\mathbb{X})$ and $x\in AP(\mathbb{R},\mathbb{R})$, we know that $f$ and $x$ are uniformly continuous in their corresponding norms. Then for any $\varepsilon>0$, there exists $\delta=\delta(\varepsilon)>0$ such that
$$\|f(t+h)-f(t)\|_{B^{\flat}}<\varepsilon$$
and
$$\|x(t+h)-x(t)\|_{AP}<\varepsilon,$$
for $|h|<\delta, h\in \mathbb{R}$.
Since $x\in AP(\mathbb{R},\mathbb{R})$, the set $\{x(t):t\in \mathbb{R}\}\subset \mathbb{R}$ is relatively compact, the closure of $\{x(t):t\in \mathbb{R}\}$, denoted by $\overline{x(\mathbb{R})}$, is a compact set in $\mathbb{R}$. So we can find a finite number of points $t_{k}\in \mathbb{R}$, $k=1,2,\ldots,m$, such that the union of these finite number's  open balls $B(x({t_{k}}),\delta)$ covers the set $\overline{x(\mathbb{R})}$. For each $t\in \mathbb{R}$, there exists $k'$, $1\leq k'\leq m$, such that $x(t)\in B(x({t_{k'}}),\delta)$. Then we obtain
$$\|f(t-x(t))\|_{B^{\flat}}\leq\|f(t-x(t))-f(t-x(t_{k'}))\|_{B^{\flat}}+\|f(t-x(t_{k'}))\|_{B^{\flat}}<+\infty,$$
from this, $f(\cdot-x(\cdot))$ is bounded. Therefore, $f(\cdot-x(\cdot))\in \mathcal{M}_{\flat}(\mathbb{R},\mathbb{X})$.
In view of the continuity of $f$ and $x$, taking $\varepsilon=\delta$, we get
\begin{align*}
&\quad \ \|f(t+h-x(t+h))-f(t-x(t))\|_{B^{\flat}}\\
&\leq\|f(t+h-x(t+h))-f(t+h-x(t))\|_{B^{\flat}}\\
&\quad+\|f(t+h-x(t)-f(t-x(t)\|_{B^{\flat}}\\
&<\varepsilon+\varepsilon=2\varepsilon.
\end{align*}
Hence $f(\cdot-x(\cdot))$ is uniformly continuous.

Step 2. We will prove $f(\cdot-x(\cdot))\in B_{AP}^{\flat}(\mathbb{R},\mathbb{X})$.

To obtain the result, we only need to show  the family $\mathcal{F}=\{f(t+h-x(t+h)):h\in \mathbb{R}\}$ is relatively compact.

Since $f\in B_{AP}^{\flat}(\mathbb{R},\mathbb{X})$, $x\in AP(\mathbb{R},\mathbb{R})$, for any $\varepsilon>0$ and $\{h_{q}\}_{q\in \mathbb{N}}\subset \mathbb{R}$, we can write
$$\|f(t+h_{1q})-f(t+h_{1q'})\|_{B^{\flat}}<\varepsilon, \quad \mathrm{for} \quad q,q'\geq N_{1}(\varepsilon),$$
$$|x(t+h_{1q})-x(t+h_{1q'})|<\delta(\varepsilon),\quad \mathrm{for}\quad q,q'\geq N_{2}(\varepsilon),$$
where $\{h_{1q}\}_{q\in \mathbb{N}},\{h_{1q'}\}_{q\in \mathbb{N}}\subset\{h_{q}\}_{q\in \mathbb{N}}$.
By the uniform  continuity of $f$ in the norm $\|\cdot\|_{B^{\flat}}$, denote $N=\max\{N_{1},N_{2}\}$, we obtain
\begin{align*}
&\quad \ \|f(t+h_{1q}-x(t+h_{1q}))-f(t+h_{1q'}-x(t+h_{1q'}))\|_{B^{\flat}}\\
&\leq\|f(t+h_{1q}-x(t+h_{1q}))-f(t+h_{1q'}-x(t+h_{1q}))\|_{B^{\flat}}\\
&\quad+\|f(t+h_{1q'}-x(t+h_{1q})-f(t+h_{1q'}-x(t+h_{1q'})\|_{B^{\flat}}\\
&<\varepsilon+\varepsilon=2\varepsilon,
\end{align*}
for~$q,~q'\geq N$, which means that the function $f(\cdot-x(\cdot))$ meets the Bochner definition, that is, $f(\cdot-x(\cdot))\in B_{AP}^{\flat}(\mathbb{R},\mathbb{X})$. In summary, the proof is complete.
\end{proof}
Let $(\mathbb{Y},\|\cdot\|_\mathbb{Y})$ be a Banach space.
\begin{definition} A function $f:\mathbb{R}\times \mathbb{X}\rightarrow \mathbb{Y},(t,x)\mapsto f(t,x)$ with $f(\cdot,x)\in \mathcal{M}_{\flat}$ for each $x\in \mathbb{X}$, is said to be Besicovitch almost periodic in $t\in \mathbb{R}$ uniformly in $x\in \mathbb{X}$ if the following properties hold true:
\begin{itemize}
\item [$(1)$] For every bounded subset $\mathbb{B}\subset \mathbb{X}$, $f(t,x)$ is uniformly continuous in $x\in \mathbb{B}$ with respect to the norm $\|\cdot\|_{B^{\flat}}$  uniformly for $t\in \mathbb{R}$.
\item [$(2)$]For each $\varepsilon>0$ and each bounded subset $\mathbb{B}\subset \mathbb{X}$, there exists $l=l(\varepsilon)>0$ such that every interval of length $l$ contains a number $\tau$ with the property
\begin{equation*}
\left(\limsup\limits_{T\rightarrow +\infty}\frac{1}{2T}\int_{-T}^{T}\|f(t+\tau,x)-f(t,x)\|_{\mathbb{Y}}^{\flat}dt\right)^{\frac{1}{\flat}}<\varepsilon,
\end{equation*}
uniformly in $x\in \mathbb{B}$.
\end{itemize}
\end{definition}
The collection of these functions will be  denoted by $B_{AP}^{\flat}(\mathbb{R}\times \mathbb{X},\mathbb{Y})$.

\section{ Besicovitch almost periodic solutions}
\setcounter{equation}{0}

In this section, we study the existence and uniqueness of   Besicovitch almost periodic mild solutions to equation
\begin{equation}\label{eq1}
  x'(t)=Ax(t)+F(t,x(t),x(t-\tau(t))),
\end{equation}
where $A$ is the infinitesimal generator of a $C_0$-semigroup $\{T(t) : t\geq 0\}$ on a Banach space $\mathbb{X}$, $F: \mathbb{R}\times \mathbb{X}\times \mathbb{X}\rightarrow \mathbb{X}$ is a measurable function.

We call $x:\mathbb{R}\rightarrow \mathbb{X}$ a mild solution of \eqref{eq1}, if for $x(\theta)=\varphi(\theta), \theta\in [t_0-\bar{\tau},t_0]$, where $\varphi\in C(t_0-\bar{\tau},t_0]$ and $\bar{\tau}=\sup\limits_{t\in \mathbb{R}}\tau(t)$,   it satisfies
\begin{equation*}
  x(t)=T(t-t_0)x(t_0)+\int_{t_0}^{t}T(t-s)F(s,x(s),x(s-\tau(s)))ds,\,\, t\geq t_0.
\end{equation*}

\begin{remark}
Although the space  $(B_{AP}^{\flat}(\mathbb{R},\mathbb{X}), \|\cdot\|_{B^\flat}$ is a Banach space, we can't directly apply the fixed point theorem on a subset of $B_{AP}^{\flat}(\mathbb{R},\mathbb{X})$ to study the existence of Besicovitch almost periodic solutions of \eqref{eq1}. Because even if we have derived that an operator $\Phi$ defined on a subset of $B_{AP}^{\flat}(\mathbb{R},\mathbb{X})$  has a fixed point $x^*$ such that
\begin{equation}\label{req1}
  \Phi x^* = x^*,
\end{equation}
we could not conclude that there exists a concrete function $y\in  Fx^*$ such that $\Phi y = y$ because \eqref{req1} only means that the equivalence class with $\Phi x^*$ as the representative element is the same as the equivalence class with $x^*$ as the representative element.
\end{remark}

In what follows, in order to distinguish, we denote
\begin{align*}
L_{\flat_1}:=\bigg\{f:f\in M_{\flat}(\mathbb{R},\mathbb{X}),~\limsup\limits_{T\rightarrow +\infty}\frac{1}{2T}\int_{-T}^{T}\|f(t)\|_{\mathbb{X}}^{\flat}dt=0\bigg\},
\end{align*}
and $L_{\flat_2}$ is the collection of all functions $f\in M_{\flat}(\mathbb{R}\times\mathbb{X}^{2},\mathbb{X})$ satisfying $$\limsup\limits_{T\rightarrow +\infty}\frac{1}{2T}\int_{-T}^{T}\|f(t,x)\|_{\mathbb{X}}^{\flat}dt=0$$
uniformly in $x\in \mathbb{B}$, where $\mathbb{B}$ is an arbitrary bounded subset of $\mathbb{X}^2$.
The quotient spaces corresponding to $L_{\flat_1}$ and $L_{\flat_2}$ are $\mathcal{M}_{\flat_1}$ and $\mathcal{M}_{\flat_2}$, respectively.

 We make some assumptions:
\begin{itemize}
 \item [$(H_1)$] Functions  $\tau\in AP(\mathbb{R},\mathbb{R}^{+})$ and  $F$ satisfies $F+L_{\flat_2}\in B_{AP}^{\flat}(\mathbb{R}\times \mathbb{X}^2,\mathbb{X})$.
 \item [$(H_2)$] The function  $F(t,x,y)$ is Lipschitz in $x,y\in \mathbb{X}$ uniformly in $t\in \mathbb{R}$, that is, there exist positive constants $\mathfrak{L}_{1}$ and  $\mathfrak{L}_{2}$ such that for all $x_1,y_1, x_2,y_2\in \mathbb{X}$ and $t\in \mathbb{R}$,
     $$\|F(t,x_1,y_1)-F(t,x_2,y_2)\|_\mathbb{X}\leq \mathfrak{L}_{1}\|x_1-x_2\|_\mathbb{X}+\mathfrak{L}_{2}\|y_1-y_2\|_\mathbb{X}$$
     and $F(t,0,0)=0$.

  \item [$(H_3)$]    $A$ is the infinitesimal generator of an exponentially stable $C_0$-semigroup
$\{T(t) : t \geq 0\}$, that is, there exist numbers $N,\lambda>0$ such that $\|T(t)\|\leq Ne^{-\lambda t}, t\geq 0$.

\item [$(H_4)$]The constant $\kappa:= \frac{N(\mathfrak{L}_1+\mathfrak{L}_2)}{\lambda}<1$, where $N$ is mentioned in $(H_3)$.
\end{itemize}

Let $$\mathbb{W}=\{y\in L^{\infty}(\mathbb{R},\mathbb{X}):\,y+L_{\flat_1}\in B_{AP}^{\flat}(\mathbb{R},\mathbb{X})\}$$
 with the norm $\|\cdot\|_{\mathbb{W}}:=\|\cdot\|_{\infty}$.

\begin{lemma}\label{lem41} $(\mathbb{W},\|\cdot\|_{\mathbb{W}})$ is a Banach space.
\end{lemma}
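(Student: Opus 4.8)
The plan is to exhibit $\mathbb{W}$ as a closed linear subspace of the Banach space $(L^{\infty}(\mathbb{R},\mathbb{X}),\|\cdot\|_{\infty})$. Since the norm $\|\cdot\|_{\mathbb{W}}$ is by definition $\|\cdot\|_{\infty}$, and a closed subspace of a Banach space is complete in the inherited norm, it suffices to establish these two facts. Throughout I will use the quotient map $\pi\colon L^{\infty}(\mathbb{R},\mathbb{X})\to \mathcal{M}_{\flat_1}$, $\pi(y)=y+L_{\flat_1}$, which is well defined because any $y\in L^{\infty}$ is locally $\flat$-integrable and satisfies $\|y\|_{M_\flat}\le \|y\|_{\infty}<\infty$, so that $L^{\infty}(\mathbb{R},\mathbb{X})\subset M_{\flat}(\mathbb{R},\mathbb{X})$; by definition $\mathbb{W}=\pi^{-1}\big(B_{AP}^{\flat}(\mathbb{R},\mathbb{X})\big)$.

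First I would check linearity. As $\pi$ is linear and $B_{AP}^{\flat}(\mathbb{R},\mathbb{X})$ is closed under scalar multiplication and addition by Lemmas \ref{lem34} and \ref{lem310}, the preimage $\mathbb{W}=\pi^{-1}(B_{AP}^{\flat})$ is a linear subspace of $L^{\infty}$: for $y_1,y_2\in\mathbb{W}$ and scalars $\alpha,\beta$ one has $\pi(\alpha y_1+\beta y_2)=\alpha\,\pi(y_1)+\beta\,\pi(y_2)\in B_{AP}^{\flat}$, while $\alpha y_1+\beta y_2$ remains essentially bounded.

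The crux is closedness. Given a sequence $\{y_n\}\subset\mathbb{W}$ with $y_n\to y$ in $\|\cdot\|_{\infty}$ (the limit existing in $L^{\infty}$ by its completeness), I must show $y\in\mathbb{W}$, i.e. $\pi(y)\in B_{AP}^{\flat}$. The bridge between the two norms is the elementary estimate
\begin{equation*}
\|g\|_{M_\flat}=\left(\limsup_{T\to+\infty}\frac{1}{2T}\int_{-T}^{T}\|g(t)\|_{\mathbb{X}}^{\flat}\,dt\right)^{\frac1\flat}\le \|g\|_{\infty},\qquad g\in L^{\infty}(\mathbb{R},\mathbb{X}),
\end{equation*}
immediate from $\|g(t)\|_{\mathbb{X}}\le\|g\|_{\infty}$ for a.e. $t$. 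Applying it to $g=y_n-y$ and recalling that $\|x+L_{\flat_1}\|_{\mathcal{M}_{\flat_1}}=\|x\|_{M_\flat}$ gives
\begin{equation*}
\|\pi(y_n)-\pi(y)\|_{B^{\flat}}=\|(y_n-y)+L_{\flat_1}\|_{\mathcal{M}_{\flat_1}}=\|y_n-y\|_{M_\flat}\le\|y_n-y\|_{\infty}\longrightarrow 0.
\end{equation*}
Hence $\{\pi(y_n)\}\subset B_{AP}^{\flat}(\mathbb{R},\mathbb{X})$ converges in $\|\cdot\|_{B^{\flat}}$ to $\pi(y)\in\mathcal{M}_{\flat_1}$, and Lemma \ref{lem37} forces $\pi(y)\in B_{AP}^{\flat}$, so $y\in\mathbb{W}$. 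Thus $\mathbb{W}$ is closed, and the completeness asserted in the statement follows.

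I expect the main obstacle to be conceptual rather than computational: one must keep the concrete $L^{\infty}$-representatives $y_n,y$ (on which $\|\cdot\|_{\mathbb{W}}=\|\cdot\|_{\infty}$ is a genuine norm) cleanly separated from their classes $\pi(y_n),\pi(y)$ in $\mathcal{M}_{\flat_1}$ (on which Besicovitch almost periodicity is defined), and apply the norm comparison to honest representatives before descending to the quotient. Once the inequality $\|\cdot\|_{M_\flat}\le\|\cdot\|_{\infty}$ is recorded, transferring $\|\cdot\|_{\infty}$-convergence to $\mathcal{M}_{\flat_1}$-convergence and invoking the closedness Lemma \ref{lem37} is routine.
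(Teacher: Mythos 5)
Your proposal is correct and takes essentially the same route as the paper: both arguments rest on the completeness of $(L^{\infty}(\mathbb{R},\mathbb{X}),\|\cdot\|_{\infty})$, the comparison $\|\cdot\|_{M_{\flat}}\leq\|\cdot\|_{\infty}$ to convert the $\|\cdot\|_{\infty}$-limit into an $\mathcal{M}_{\flat}$-limit of the equivalence classes, and the stability of $B_{AP}^{\flat}(\mathbb{R},\mathbb{X})$ under such limits. The only difference is cosmetic: where you invoke Lemma \ref{lem37} for that last step, the paper re-runs the $3\varepsilon$ uniform-continuity and translation-number arguments inline, while you additionally record the (routine) check that $\mathbb{W}$ is a linear subspace, which the paper omits.
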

\begin{proof} Let $\{f_{n};n\geq1\}$ be  an arbitrary Cauchy sequence in $\mathbb{W}$. Then, for each $\varepsilon>0$, one can find an integer $N=N(\varepsilon)$ such that $$\|f_{n}-f_{m}\|_{\mathbb{W}}<\varepsilon, \,\,n,m>N.$$
Since $(L^{\infty}(\mathbb{R},\mathbb{X}),\|\cdot\|_{\infty})$ is a Banach space and $\mathbb{W} \subset L^{\infty}(\mathbb{R},\mathbb{X})$ it follows that there exists $f\in L^{\infty}(\mathbb{R},\mathbb{X})$ such that $\|f_{n}-f\|_{\infty}\rightarrow0$ as $n\rightarrow\infty$. Noting that $\|f_{n}-f\|_{M_{\flat}}\leq\|f_{n}-f\|_{\infty}$, therefore,
\begin{equation}\label{4.5}
 \|f_{n}-f\|_{M_{\flat}}\rightarrow0 \quad \mathrm{as} \,\, n\rightarrow\infty,
\end{equation}
 which means that $f\in M_{\flat}(\mathbb{R},\mathbb{X})$ from the completeness of $ M_{\flat}(\mathbb{R},\mathbb{X})$. Further, we have $f+L_{\flat_1}\in \mathcal{M}_{\flat}(\mathbb{R},\mathbb{X})$. From formula \eqref{4.5}, for any $\varepsilon>0$, there exists a large enough $N_{1}=N_{1}(\varepsilon)$ such that $\|f_{N_{1}}-f\|_{M_{\flat}}<\varepsilon$.
Since $f_{N_{1}}+L_{\flat_1}\in B_{AP}^{\flat_1}(\mathbb{R},\mathbb{X})$ is uniformly continuous in the norm $\|\cdot\|_{B^{\flat}}$, there exists $\delta=\delta(\varepsilon)$ such that
$$\|f_{N_{1}}(t)+L_{\flat_1}-(f_{N_{1}}(t+h)+L_{\flat_1})\|_{B^{\flat}}<\varepsilon,$$
for $|h|<\delta$. Now
\begin{align*}
\|f(t+h)+L_{\flat_1}-(f(t)+L_{\flat_1})\|_{\mathcal{M}_{\flat}}&\leq\|f(t+h)+L_{\flat_1}-(f_{N_{1}}(t+h)+L_{\flat_1})\|_{\mathcal{M}_{\flat}}\\
&\quad+\|f_{N_{1}}(t+h)+L_{\flat_1}-(f_{N_{1}}(t)+L_{\flat_1})\|_{\mathcal{M}_{\flat}}\\
&\quad+\|f_{N_{1}}(t)+L_{\flat_1}-(f(t)+L_{\flat_1})\|_{\mathcal{M}_{\flat}}\\
&<\varepsilon+\varepsilon+\varepsilon=3\varepsilon,
\end{align*}
which implies $f+L_{\flat_1}$ is uniformly continuous in the norm $\|\cdot\|_{\mathcal{M}_{\flat}}$. Denote by $\tau$   the $\varepsilon$-translation number of $f_{N_{1}}+L_{\flat_1}$, then
\begin{align*}
\|f(t+\tau)+L_{\flat_1}-(f(t)+L_{\flat_1})\|_{\mathcal{M}_{\flat}}\leq&\|f(t+\tau)+L_{\flat_1}-(f_{N_{1}}(t+\tau)+L_{\flat_1})\|_{\mathcal{M}_{\flat}}\\
&+\|f_{N_{1}}(t+\tau)+L_{\flat_1}-(f_{N_{1}}(t)+L_{\flat_1})\|_{\mathcal{M}_{\flat}}\\
&+\|f_{N_{1}}(t)+L_{\flat_1}-(f(t)+L_{\flat_1})\|_{\mathcal{M}_{\flat}}\\
<&\varepsilon+\varepsilon+\varepsilon=3\varepsilon,
\end{align*}
which yields $f+L_{\flat_1}\in B_{AP}^{\flat_1}(\mathbb{R},\mathbb{X})$. Consequently, $f\in \mathbb{W}$. The proof is complete.
\end{proof}

\begin{lemma}\label{lem42}Let $(H_1)$ and $(H_2)$ hold. If  $x\in \mathbb{W}$, then  $F(\cdot,x(\cdot),x(\cdot-\tau(\cdot)))\in L^\infty(\mathbb{R},\mathbb{X})$ and $F(\cdot,x(\cdot),x(\cdot-\tau(\cdot)))+L_{\flat_1}\in B_{AP}^{\flat}(\mathbb{R},\mathbb{X})$.
\end{lemma}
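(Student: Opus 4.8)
The plan is to separate the two assertions, disposing of essential boundedness quickly and then concentrating all the work on membership in $B_{AP}^{\flat}$. Write $u(t)=x(t)$, $v(t)=x(t-\tau(t))$ and $g(t)=F(t,u(t),v(t))$. First I observe that $v\in\mathbb{W}$: since $\tau\in AP(\mathbb{R},\mathbb{R}^{+})\subset AP(\mathbb{R},\mathbb{R})$ and $x+L_{\flat_1}\in B_{AP}^{\flat}(\mathbb{R},\mathbb{X})$, Lemma~\ref{lem312} gives $v+L_{\flat_1}\in B_{AP}^{\flat}(\mathbb{R},\mathbb{X})$, while $\|v(t)\|_{\mathbb{X}}\le\|x\|_{\infty}$ a.e. shows $v\in L^{\infty}(\mathbb{R},\mathbb{X})$. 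For essential boundedness of $g$, I use $(H_2)$ together with $F(t,0,0)=0$, so that $\|g(t)\|_{\mathbb{X}}=\|F(t,u(t),v(t))-F(t,0,0)\|_{\mathbb{X}}\le\mathfrak{L}_1\|u(t)\|_{\mathbb{X}}+\mathfrak{L}_2\|v(t)\|_{\mathbb{X}}\le(\mathfrak{L}_1+\mathfrak{L}_2)\|x\|_{\infty}$ a.e.; measurability of $g$ follows from that of $F,u,v,\tau$, hence $g\in L^{\infty}(\mathbb{R},\mathbb{X})$.

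To prove $g+L_{\flat_1}\in B_{AP}^{\flat}(\mathbb{R},\mathbb{X})$ I verify the Bohr property of Definition~\ref{defb}. Let $\mathbb{B}_0\subset\mathbb{X}^2$ be the closed ball of radius $\|x\|_{\infty}$, which contains the essential range of $(u,v)$. Arguing as in Lemma~\ref{lem39}, the (relatively dense) set of common translation numbers of $u$ and $v$, intersected with the relatively dense set of translation numbers of $F$ on $\mathbb{B}_0$ furnished by condition~$(2)$ of $(H_1)$, is again relatively dense; for suitable tolerances I fix such a common $\tau$. Then
$$\|g(\cdot+\tau)-g(\cdot)\|_{B^{\flat}}\le I+J,$$
where $I=\|F(\cdot+\tau,u(\cdot+\tau),v(\cdot+\tau))-F(\cdot+\tau,u(\cdot),v(\cdot))\|_{B^{\flat}}$ and $J=\|F(\cdot+\tau,u(\cdot),v(\cdot))-F(\cdot,u(\cdot),v(\cdot))\|_{B^{\flat}}$. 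By the Lipschitz bound $(H_2)$ and Minkowski's inequality in the mean seminorm (together with subadditivity of $\limsup$), $I\le\mathfrak{L}_1\|u(\cdot+\tau)-u(\cdot)\|_{B^{\flat}}+\mathfrak{L}_2\|v(\cdot+\tau)-v(\cdot)\|_{B^{\flat}}<(\mathfrak{L}_1+\mathfrak{L}_2)\varepsilon$. The $\mathcal{M}_{\flat}$-uniform continuity of $g$ required by condition~$(1)$ of Definition~\ref{defb} comes out of the \emph{identical} decomposition with $\tau$ replaced by a small shift $h$, so the whole problem reduces to estimating the frozen-state term $J$, in which the state $(u(t),v(t))$ varies with $t$ but the translation acts only on the first argument of $F$.

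The term $J$ is the crux, because condition~$(2)$ controls $\limsup_T\frac{1}{2T}\int_{-T}^{T}\|F(t+\tau,\xi)-F(t,\xi)\|_{\mathbb{X}}^{\flat}\,dt$ only for each \emph{fixed} $\xi\in\mathbb{B}_0$, whereas $J$ feeds a time-dependent $\xi=(u(t),v(t))$ into that estimate. My plan is to approximate the inner state: given $\delta>0$ choose a measurable simple function $w_{\delta}:\mathbb{R}\to\mathbb{X}^2$ taking finitely many values $\xi_1,\dots,\xi_m\in\mathbb{B}_0$ with $\|(u(t),v(t))-w_{\delta}(t)\|_{\mathbb{X}^2}<\delta$, and use $(H_2)$ to replace $(u,v)$ by $w_{\delta}$ in both arguments of $J$ at a total cost of $2(\mathfrak{L}_1+\mathfrak{L}_2)\delta$. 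What remains is $\|F(\cdot+\tau,w_{\delta}(\cdot))-F(\cdot,w_{\delta}(\cdot))\|_{B^{\flat}}$, whose $\flat$-th power mean splits over the level sets $\{w_{\delta}=\xi_j\}$; dropping the indicators and invoking condition~$(2)$ on each $\xi_j$ bounds the $m$ pieces, and choosing the translation tolerance smaller than $\varepsilon\,m^{-1/\flat}$ forces the sum below $\varepsilon^{\flat}$. Thus $J$ can be made arbitrarily small, $g$ has the Bohr property, and with the uniform continuity already noted we conclude $g+L_{\flat_1}\in B_{AP}^{\flat}(\mathbb{R},\mathbb{X})$.

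The genuinely delicate point—the main obstacle—is the finiteness of $m$, i.e. the \emph{total boundedness of the essential range of $(u,v)$}, on which the level-set reduction rests. Unlike the Bohr setting, this is not automatic here: Lemma~\ref{lem35} yields relative compactness of the translates of $u,v$ only in the mean norm of $\mathcal{M}_{\flat}$, and that seminorm does not control pointwise values, so it cannot force the range of $x$ to be precompact in $\mathbb{X}$. I therefore expect that the clean argument above must either be supplemented by an assumption guaranteeing this total boundedness, be run under the concrete compactness available in the intended application, or be restricted to finite-dimensional $\mathbb{X}$; securing (or bypassing) this precompactness is where the real difficulty of the lemma lies, the surrounding estimates being routine bookkeeping with $(H_2)$ and Minkowski's inequality.
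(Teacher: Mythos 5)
Your preliminary work and your $I+J$ splitting are, up to notation, exactly the paper's own proof: the paper also reduces the delayed term to Lemma~\ref{lem312}, gets $L^\infty$ and $M_{\flat}$ membership from $(H_2)$ (with $F(t,0,0)=0$) plus Minkowski, takes common $\varepsilon$-translation numbers via Lemma~\ref{lem39}, and splits the increment into a Lipschitz term and a frozen-state term (the paper freezes the state at the shifted time, you at the unshifted one --- immaterial). The divergence is at the frozen-state term $J$, and there your assessment of the difficulty is accurate. Writing $u(t)=x(t)$, $v(t)=x(t-\tau(t))$, the paper disposes of its version of $J$ in one line by asserting
\begin{align*}
\Bigl(\limsup_{T\rightarrow+\infty}\frac{1}{2T}\int_{-T}^{T}\bigl\|F(t+h,u(t+h),v(t+h))-F(t,u(t+h),v(t+h))\bigr\|_{\mathbb{X}}^{\flat}\,dt\Bigr)^{\frac{1}{\flat}}\leq\varepsilon,
\end{align*}
i.e.\ it feeds the time-dependent state directly into the estimate that $(H_1)$, via condition $(2)$ of the definition of $B_{AP}^{\flat}(\mathbb{R}\times\mathbb{X}^{2},\mathbb{X})$, supplies for each \emph{fixed} $(u,v)$ in the bounded set $\mathbb{B}$. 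Under the literal reading of ``uniformly in $x\in\mathbb{B}$'' (one $\ell$ and one $h$ serving every fixed state), this is precisely the interchange of $\sup_{(u,v)\in\mathbb{B}}$ with the time average that you decline to make: the exceptional $t$-set on which $\|F(t+h,u,v)-F(t,u,v)\|_{\mathbb{X}}$ is large may depend on $(u,v)$, and a moving state can track it. So the crux you isolate is not a step you failed to find; it is a step the paper performs with no justification.

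Consequently your proposal is not a complete proof of Lemma~\ref{lem42} as stated --- you say so yourself --- but it breaks down exactly where the published proof is unrigorous, and everything else in it coincides with the paper. Your level-set repair is sound where it applies, and your diagnosis of what it needs (total boundedness of the essential range of $(u,v)$) is correct: in finite-dimensional $\mathbb{X}$ it closes the gap completely, but it is unavailable in general, and the paper's own example runs in $\mathbb{X}=L^{2}(0,\pi)$, so finite-dimensionality is no escape there. Two ways to actually finish: (i) interpret, or restate, condition $(2)$ of the uniform almost periodicity definition (hence $(H_1)$) in the strong form $\limsup_{T\rightarrow+\infty}\frac{1}{2T}\int_{-T}^{T}\sup_{(u,v)\in\mathbb{B}}\|F(t+\tau,u,v)-F(t,u,v)\|_{\mathbb{X}}^{\flat}\,dt<\varepsilon^{\flat}$; then $J$ follows at once from the pointwise bound by the supremum (and the same estimate with small $|h|$ also delivers the $\mathcal{M}_{\flat}$-uniform continuity of the composition, which suffers from the same issue and which you dispatch a bit too quickly), and the paper's example does satisfy this strong form because it has the product structure $g(t)h(u,v)$ with $h$ bounded and Lipschitz; or (ii) prove that bounded functions in $B_{AP}^{\flat}(\mathbb{R},\mathbb{X})$ have essentially totally bounded range off sets of arbitrarily small upper density (say by approximation by $\mathbb{X}$-valued trigonometric polynomials, if the inclusion $B_{AP}^{\flat}\subset B^{\flat}$ claimed in the introduction is granted), which is exactly the input your simple-function argument needs. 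Either supplement turns your sketch into a proof; neither appears in the paper.
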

\begin{proof} Firstly, we show that $x(\cdot-\tau(\cdot))\in \mathbb{W}$. Since $\tau\in AP(\mathbb{R},\mathbb{R}^{+})$, so $\{t-\tau(t);~t\in \mathbb{R}\}\subset \mathbb{R}$, then
$$\|x(t-\tau(t))\|_\mathbb{X}\leq\|x\|_{\infty},$$
which yields $x(\cdot-\tau(\cdot))\in L^{\infty}(\mathbb{R},\mathbb{X})$. And according to Lemma \ref{lem312}, we know that $x(\cdot-\tau(\cdot))+L_{\flat_1}\in B_{AP}^{\flat}(\mathbb{R},\mathbb{X})$.
Hence, $x(\cdot-\tau(\cdot))\in \mathbb{W}$.

Next, we will prove that $F(\cdot,x(\cdot),x(\cdot-\tau(\cdot)))\in L^\infty(\mathbb{R},\mathbb{H})$ and $F(\cdot,x(\cdot),x(\cdot-\tau(\cdot)))\in {M}_{\flat}$.   By $(H_2)$,  we have for all $t\in \mathbb{R}$,
\begin{align*}
\|F(t,x(t),x(t-\tau(t)))\|_\mathbb{X}
\leq& \mathfrak{L}_1\|x(t)\|_\mathbb{X} +\mathfrak{L}_2\|x(t-\tau(t))\|_\mathbb{X}.
\end{align*}
 The above inequality implies $F(\cdot,x(\cdot),x(\cdot-\tau(\cdot)))\in L^\infty(\mathbb{R},\mathbb{H})$. Moreover, according to the Minkowski inequality, we have
\begin{align*}
&\ \quad\Big(\frac{1}{2T}\int_{-T}^{T}\|F(t,x(t),x(t-\tau(t)))\|_\mathbb{X}^{\flat}dt\Big)^{\frac{1}{\flat}}\\
&\leq \Big(\frac{1}{2T}\int_{-T}^{T} (\mathfrak{L}_1\|x(t)\|_\mathbb{X}+\mathfrak{L}_2\|x(t-\tau(t))\|_\mathbb{X})^{\flat} dt\Big)^{\frac{1}{\flat}}\\
&\leq \mathfrak{L}_1\Big(\frac{1}{2T}\int_{-T}^{T}\|x(t)\|_\mathbb{X}^{\flat}dt\Big)^{\frac{1}{\flat}}+\mathfrak{L}_2\Big(\frac{1}{2T}\int_{-T}^{T}\|x(t-\tau_q(t))\|_\mathbb{X}^{\flat}dt\Big)^{\frac{1}{\flat}},
\end{align*}
thus
\begin{equation*}\label{4.10}
\begin{split}
\|F(\cdot,x(\cdot),x(\cdot-\tau(\cdot)))\|_{M_\flat}\leq \mathfrak{L}_1 \|x\|_{M_\flat}+ \mathfrak{L}_2\|x\|_{M_\flat},
\end{split}
\end{equation*}
which implies $ F(\cdot,x(\cdot),x(\cdot-\tau(\cdot)))\in {M}_{\flat}$.

Finally, we will show that  $F(\cdot,x(\cdot),x(\cdot-\tau(\cdot)))+L_{\flat_1}\in B_{AP}^{\flat}(\mathbb{R},\mathbb{X})$.   Since $x$ and $x(\cdot-\tau(\cdot))\in L^{\infty}(\mathbb{R},\mathbb{X})$, one can find a bounded subset $\mathbb{B}\subset\mathbb{X}^2$ satisfying
$$(x(t),x(t-\tau(t)))\in \mathbb{B},$$
for all $t\in \mathbb{R}$.   In view of Lemma \ref{lem39}, for $\varepsilon>0$, one can find $\ell=\ell(\varepsilon)>0$ such that every interval of length $\ell$  contains an $h$ with the property that
\begin{equation*}
\left(\limsup\limits_{T\rightarrow +\infty}\frac{1}{2T}\int_{-T}^{T}\|F(t+\tau,u,v)-F(t,u,v)\|_{\mathbb{X}}^{\flat}dt\right)^{\frac{1}{\flat}}<\varepsilon,\,\,u,v\in  \mathbb{X},
\end{equation*}
 \begin{align*}
 &\|x(t+h)+L_{\flat_1}-(x(t)+L_{\flat_1})\|_{B^{\flat}}\nonumber\\
 =&\|x(t+h)-x(t)\|_{M_{\flat}}<\varepsilon
 \end{align*}
 and
 \begin{align*}
 &\|x(t+h-\tau(t+h))+L_{\flat_1}-(x(t-\tau(t))+L_{\flat_1})\|_{B^{\flat}}\nonumber\\
 =&\|x(t+h-\tau(t+h))-x(t-\tau(t))\|_{M_{\flat}}<\varepsilon.
 \end{align*}
From above inequalities, for  $t\in \mathbb{R}$ and $|h|<\delta=\min\{\delta_1,\delta_2\}$, we have
\begin{align*}\label{4.13}
&\|F(t+h,x(t+h),x(t+h-\tau(t+h)))+L_{\flat_1}-(F(t,x(t),x(t-\tau(t)))+L_{\flat_1})\|_{\mathcal{M}_{\flat}}\nonumber\\
=&\|F(t+h,x(t+h),x(t+h-\tau(t+h)))-F(t,x(t),x(t-\tau(t)))\|_{{M}_{\flat}}\nonumber\\
\leq&\bigg(\limsup\limits_{T\rightarrow +\infty}\frac{1}{2T}\int_{-T}^{T}\|F(t+h,x(t+h),x(t+h-\tau(t+h)))\nonumber\\
&-F(t,x(t+h),x(t+h-\tau(t+h)))\|_\mathbb{X}^{\flat}dt\bigg)^{\frac{1}{\flat}}\nonumber\\
&+\bigg(\limsup\limits_{T\rightarrow +\infty}\frac{1}{2T}\int_{-T}^{T}\|F(t,x(t+h),x(t+h-\tau(t+h)))
-F(t,x(t),x(t-\tau(t)))\|_{\mathbb{X}}^{\flat}dt\bigg)^{\frac{1}{\flat}}\\
\leq& \varepsilon(1+\mathcal{L}_1+\mathcal{L}_2),
\end{align*}
which implies that $F(\cdot,x(\cdot),x(\cdot-\tau(\cdot)))+L_{\flat_1} \in B_{AP}^{\flat}(\mathbb{R},\mathbb{X})$. The proof is complete.
\end{proof}

\begin{lemma}\label{lem43} Assume that $(H_1)$-$(H_3)$ hold. Let $x\in \mathbb{W}$,  then  the function $U:\mathbb{R}\rightarrow \mathbb{X}$ defined by
\begin{equation}\label{4.6}
U(t)=\int_{-\infty}^{t}T(t-s)F(s,x(s),x(s-\tau(s)))ds
\end{equation}
belongs to $\mathbb{W}$.
\end{lemma}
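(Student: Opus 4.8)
The plan is to abbreviate $G(\cdot):=F(\cdot,x(\cdot),x(\cdot-\tau(\cdot)))$ and invoke Lemma \ref{lem42}, which already supplies the two facts I need as input: $G\in L^{\infty}(\mathbb{R},\mathbb{X})$ and $G+L_{\flat_1}\in B_{AP}^{\flat}(\mathbb{R},\mathbb{X})$. The whole task then becomes transferring these two properties through the convolution operator $G\mapsto U$. First I would dispose of boundedness: by $(H_3)$ and the substitution $u=t-s$,
$$\|U(t)\|_{\mathbb{X}}\leq\int_{-\infty}^{t}Ne^{-\lambda(t-s)}\|G(s)\|_{\mathbb{X}}\,ds\leq N\|G\|_{\infty}\int_{0}^{\infty}e^{-\lambda u}\,du=\frac{N}{\lambda}\|G\|_{\infty},$$
so $U$ is measurable and essentially bounded, hence $U\in M_{\flat}(\mathbb{R},\mathbb{X})$ and $U+L_{\flat_1}\in\mathcal{M}_{\flat}$.

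The heart of the argument is a single translation estimate that I would apply twice: once to a small shift $h$ (to get condition (1) of Definition \ref{defb}) and once to a translation number $\eta$ (to get condition (2)). After substituting $s=r+\alpha$ in $U(t+\alpha)$ one has
$$U(t+\alpha)-U(t)=\int_{-\infty}^{t}T(t-r)\,[G(r+\alpha)-G(r)]\,dr,$$
so, writing $\Delta_{\alpha}(r):=G(r+\alpha)-G(r)$ and $u=t-r$,
$$\|U(t+\alpha)-U(t)\|_{\mathbb{X}}\leq N\int_{0}^{\infty}e^{-\lambda u}\,\|\Delta_{\alpha}(t-u)\|_{\mathbb{X}}\,du=:\phi_{\alpha}(t).$$
Applying Minkowski's integral inequality to $\phi_{\alpha}$ over $[-T,T]$ and then passing to the $\limsup$ in $T$, I expect to arrive at the clean bound
$$\|U(\cdot+\alpha)-U(\cdot)\|_{M_{\flat}}\leq N\int_{0}^{\infty}e^{-\lambda u}\,\|\Delta_{\alpha}\|_{M_{\flat}}\,du=\frac{N}{\lambda}\,\|G(\cdot+\alpha)-G(\cdot)\|_{M_{\flat}}.$$

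With this estimate in hand the conclusion is immediate. Taking $\alpha=h$ with $|h|<\delta$ and using the uniform continuity of $G+L_{\flat_1}$ in the $\mathcal{M}_{\flat}$-norm gives the uniform continuity of $U+L_{\flat_1}$; taking $\alpha=\eta$ ranging over the (relatively dense) set of $\varepsilon$-translation numbers of $G+L_{\flat_1}$ shows that each such $\eta$ is a $\frac{N}{\lambda}\varepsilon$-translation number of $U+L_{\flat_1}$, so the translation numbers of $U$ are relatively dense as well. Both conditions of Definition \ref{defb} thus hold for $U+L_{\flat_1}$, whence $U+L_{\flat_1}\in B_{AP}^{\flat}(\mathbb{R},\mathbb{X})$; combined with $U\in L^{\infty}(\mathbb{R},\mathbb{X})$ this yields $U\in\mathbb{W}$.

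The step I expect to be the main obstacle is the justification of the interchange of $\limsup_{T\to+\infty}$ with the $u$-integral in the Minkowski step. I would handle it by a reverse (dominated) Fatou inequality: the bound $\|\Delta_{\alpha}\|_{\mathbb{X}}\leq 2\|G\|_{\infty}$ produces the integrable dominating function $u\mapsto 2N\|G\|_{\infty}e^{-\lambda u}$, while for each fixed $u$ the shift-invariance of the averaging window gives $\limsup_{T\to+\infty}\frac{1}{2T}\int_{-T}^{T}\|\Delta_{\alpha}(t-u)\|_{\mathbb{X}}^{\flat}\,dt=\|\Delta_{\alpha}\|_{M_{\flat}}^{\flat}$, the boundary discrepancy being $O(1/T)$ precisely because $\Delta_{\alpha}$ is bounded. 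These two ingredients together legitimize moving the $\limsup$ inside the integral and deliver the displayed estimate.
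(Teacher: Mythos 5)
Your proposal is correct and follows essentially the same route as the paper: bound $\|U\|_{\infty}$ via the exponential decay of $T(\cdot)$, then establish the single estimate $\|U(\cdot+\alpha)-U(\cdot)\|_{M_{\flat}}\leq \frac{N}{\lambda}\|G(\cdot+\alpha)-G(\cdot)\|_{M_{\flat}}$ and apply it both to small shifts (uniform continuity) and to the $\varepsilon$-translation numbers of $G+L_{\flat_1}$ (Bohr property), exactly as the paper does with its translation number $\sigma$. If anything, your treatment is more careful than the paper's, which passes the $M_{\flat}$-seminorm inside the convolution integral without comment, whereas you justify this interchange of $\limsup$ and integral via Minkowski's integral inequality, the dominating function $2N\|G\|_{\infty}e^{-\lambda u}$, and the $O(1/T)$ boundary discrepancy.
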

\begin{proof} In view of Lemma \ref{lem42}, we know that  $F(\cdot,x(\cdot),x(\cdot-\tau(\cdot)))+L_{\flat_1} \in B_{AP}^{\flat}(\mathbb{R},\mathbb{X})$.
 Our first task is to show that the integral on the right hand of formula \eqref{4.6} exists. Since $(H_3)$,    we have
\begin{align*}
 \|U(t)\|_\mathbb{X}=&\left\|\int_{-\infty}^{t}T(t-s)F(s,x(s),x(s-\tau(s)))ds\right\|_\mathbb{X}\\
\leq&\int_{-\infty}^{t}\|T(t-s)\|_\mathbb{X} \|F(s,x(s),x(s-\tau(s)))\|_\mathbb{X}ds \\
 \leq&\int_{-\infty}^{t}Ne^{-\lambda(t-s)}(\mathcal{L}_1\|x(s)\|_\mathbb{X}+\mathcal{L}_2\|x(s-\tau(s))\|_\mathbb{X})ds \\
\leq&\frac{(\mathfrak{L}_1+\mathcal{L}_2)\|x\|_{\mathbb{W}}}{\lambda},
 \end{align*}
which yields that
$$\|U\|_{\infty}\leq \frac{(\mathfrak{L}_1+\mathcal{L}_2)}{\lambda}\|x\|_{\mathbb{W}},$$
that is to say, \eqref{4.6} is well defined, and as a byproduct, we have obtained that $U\in L^\infty(\mathbb{R},\mathbb{H})$.

Next, we will show that $U+L_{\flat_1}\in B_{AP}^{\flat}(\mathbb{R},\mathbb{X})$. Because $x\in AP(\mathbb{R},\mathbb{X})$, then  by Lemma \ref{lem42},  $F(\cdot,x(\cdot),x(\cdot-\tau(\cdot)))+L_{\flat}\in B_{AP}^{\flat}(\mathbb{R},\mathbb{X})$,  so for any $\varepsilon>0$, we denote by $\sigma=\sigma(\varepsilon)>0$   the   $\varepsilon$-translation number of $F(\cdot,x(\cdot),x(\cdot-\tau(\cdot)))+L_{\flat_1}$. From this, we have
 \begin{align*}
 &\|F(t+\sigma,x(t+\sigma),x(t+\sigma-\tau(t+\sigma)))+L_{\flat_1}-(F(t,x(t),x(t-\tau(t)))+L_{\flat_1})\|_{B^{\flat}}\\
 =&\ \|F(t+\sigma,x(t+\sigma),x(t+\sigma-\tau(t+\sigma)))-F(t,x(t),x(t-\tau(t)))\|_{M_{\flat}}<\varepsilon.
 \end{align*}
 Further, we have
\begin{align*}
 &\|U(t+\sigma)-U(t)\|_{M_{\flat}}\\
 =&\left\|\int_{-\infty}^{t+\sigma}T(t+\sigma-s)F(s,x(s),x(s-\tau(s)))ds
 -\int_{-\infty}^{t}T(t-s)F(s,x(s),x(s-\tau(s)))\right\|_{M_{\flat}}\\
 =&\bigg\|\int_{-\infty}^{t}T(t-s)F(s+\sigma,x(s+\sigma),x(s+\sigma-\tau(s+\sigma)))ds\\
& -\int_{-\infty}^{t}T(t-s)F(s,x(s),x(s-\tau(s)))ds\bigg\|_{M_{\flat}}\\
\leq&\bigg\|\int_{-\infty}^{t}T(t-s)[F(s+\sigma,x(s+\sigma),x(s+\sigma-\tau(s+\sigma)))ds
 -F(s,x(s),x(s-\tau(s)))]ds\bigg\|_{M_{\flat}}\\
\leq&\varepsilon \int_{-\infty}^{t}Ne^{-\lambda(t-s)}ds\\
=&\frac{ \varepsilon N }{\lambda},
 \end{align*}
 thus
 $$\|U(t+\sigma)+L_{\flat_1}-(U(t)+L_{\flat_1})\|_{\mathcal{M}_{\flat}}<\frac{ \varepsilon N }{\lambda},$$
 which means that $U+L_{\flat_1}$ possesses   Bohr's property. When $\sigma$ is replaced by a small enough real number, we know that $U_p+L_{\flat_1}$ is uniformly continuous. Hence, $U+L_{\flat_1}\in B_{AP}^{\flat}(\mathbb{R},\mathbb{H})$. In conclusion, the proof is complete.
\end{proof}

\begin{definition}By a Besicovitch almost periodic mild solution $x=(x_{1},x_{2},\cdots,x_{n})^\mathrm{T}:\mathbb{R}\rightarrow \mathbb{X}$ of system \eqref{eq1}, we mean that $x+L_{\flat_1}\in B_{AP}^{\flat}(\mathbb{R},\mathbb{X})$  and $x$ satisfies
\begin{equation*}
x(t)=\int_{-\infty}^{t}T(t-s)F(s,x(s),x(s-\tau(s)))ds,\,t\in \mathbb{R}.
\end{equation*}
\end{definition}

\begin{theorem}\label{thm41} Suppose that $(H_1)$-$(H_4)$ hold, then  system \eqref{eq1} has a unique Besicovitch  almost periodic mild solution in $\mathbb{W}$.
\end{theorem}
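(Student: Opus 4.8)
The plan is to recast \eqref{eq1} as a fixed point problem on the Banach space $(\mathbb{W},\|\cdot\|_{\mathbb{W}})$ provided by Lemma \ref{lem41}, and then invoke the Banach contraction principle. Concretely, I would define the operator $\Phi:\mathbb{W}\to\mathbb{W}$ by
\[
(\Phi x)(t)=\int_{-\infty}^{t}T(t-s)F(s,x(s),x(s-\tau(s)))\,ds,\qquad t\in\mathbb{R}.
\]
A Besicovitch almost periodic mild solution in the sense just defined is precisely a fixed point of $\Phi$, so it suffices to prove that $\Phi$ is a well-defined self-map of $\mathbb{W}$ possessing a unique fixed point.

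First I would verify that $\Phi$ maps $\mathbb{W}$ into itself, which is exactly the content of Lemma \ref{lem43}: for $x\in\mathbb{W}$, assumptions $(H_1)$--$(H_3)$ guarantee both that the integral converges absolutely, so that $\Phi x\in L^{\infty}(\mathbb{R},\mathbb{X})$ with $\|\Phi x\|_{\infty}\le\frac{N(\mathfrak{L}_1+\mathfrak{L}_2)}{\lambda}\|x\|_{\mathbb{W}}$, and that $(\Phi x)+L_{\flat_1}\in B_{AP}^{\flat}(\mathbb{R},\mathbb{X})$; together these mean $\Phi x\in\mathbb{W}$. This step already relies on Lemma \ref{lem42} to know that the nonlinearity $F(\cdot,x(\cdot),x(\cdot-\tau(\cdot)))$ lies in $L^{\infty}(\mathbb{R},\mathbb{X})$ and has Besicovitch almost periodic $L_{\flat_1}$-class, which in turn rests on the composition Lemma \ref{lem312}.

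Next I would establish the contraction estimate, carried out in the $\|\cdot\|_{\mathbb{W}}=\|\cdot\|_{\infty}$ norm. For $x,y\in\mathbb{W}$, the Lipschitz hypothesis $(H_2)$ and the exponential bound $\|T(t)\|\le Ne^{-\lambda t}$ from $(H_3)$ give, for a.e.\ $t\in\mathbb{R}$,
\[
\|(\Phi x)(t)-(\Phi y)(t)\|_{\mathbb{X}}\le\int_{-\infty}^{t}Ne^{-\lambda(t-s)}\big(\mathfrak{L}_1\|x(s)-y(s)\|_{\mathbb{X}}+\mathfrak{L}_2\|x(s-\tau(s))-y(s-\tau(s))\|_{\mathbb{X}}\big)\,ds.
\]
Since both $\|x(s)-y(s)\|_{\mathbb{X}}$ and, by the shift argument used in Lemma \ref{lem42}, $\|x(s-\tau(s))-y(s-\tau(s))\|_{\mathbb{X}}$ are bounded a.e.\ by $\|x-y\|_{\mathbb{W}}$, and since $\int_{-\infty}^{t}Ne^{-\lambda(t-s)}\,ds=N/\lambda$, I obtain
\[
\|\Phi x-\Phi y\|_{\mathbb{W}}\le\frac{N(\mathfrak{L}_1+\mathfrak{L}_2)}{\lambda}\|x-y\|_{\mathbb{W}}=\kappa\|x-y\|_{\mathbb{W}}.
\]
By $(H_4)$ we have $\kappa<1$, so $\Phi$ is a contraction and the Banach fixed point theorem furnishes a unique $x^{*}\in\mathbb{W}$ with $\Phi x^{*}=x^{*}$.

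The step I expect to carry the real weight is the self-mapping property, namely Lemma \ref{lem43}, rather than the contraction, which is the routine estimate above. The conceptual point worth stressing---and the reason the entire construction is set on $\mathbb{W}\subset L^{\infty}(\mathbb{R},\mathbb{X})$ with the $\|\cdot\|_{\infty}$ norm rather than on $B_{AP}^{\flat}(\mathbb{R},\mathbb{X})$ directly---is that the fixed point $x^{*}$ is a genuine, essentially bounded function and not merely an equivalence class. This is exactly what circumvents the obstruction recorded in the Remark following \eqref{req1}: solving the fixed point equation in the concrete space $\mathbb{W}$ produces a concrete function $x^{*}$ whose $L_{\flat_1}$-class is Besicovitch almost periodic, so $x^{*}$ is the desired unique Besicovitch almost periodic mild solution.
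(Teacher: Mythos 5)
Your proposal is correct and follows essentially the same route as the paper: the same integral operator on $(\mathbb{W},\|\cdot\|_{\mathbb{W}})$, self-mapping via Lemma \ref{lem43}, the identical contraction estimate in the $\|\cdot\|_{\infty}$ norm, and the Banach fixed point theorem under $(H_4)$. Your closing observation about why the fixed point argument is run in the concrete space $\mathbb{W}$ rather than in $B_{AP}^{\flat}(\mathbb{R},\mathbb{X})$ is exactly the point of the paper's Remark preceding \eqref{req1}.
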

\begin{proof}Define an operator $\Psi:\mathbb{W}\rightarrow \mathbb{W}$ by
\begin{equation*}
(\Psi x)(t)=\int_{-\infty}^{t}T(t-s)F(s,x(s),x(s-\tau(s)))ds,\,\, x\in \mathbb{W}, \,\,t\in \mathbb{R}.
\end{equation*}
 Obviously, it is well-defined and $\Psi$  maps  $\mathbb{W}$ into $\mathbb{W}$ according to Lemma \ref{lem43}. We just have to show that $\Psi: \mathbb{W}\rightarrow \mathbb{W}$ is a contraction mapping. In fact, for any  $x,y\in \mathbb{W}$,
\begin{align*}
\|(\Psi x)(t)-(\Psi y)(t)\|_\mathbb{X}=&\Big\|\int_{-\infty}^{t}T(t-s)\big(F(s,x(s),x(s-\tau(s)))-F(s,y(s),y(s-\tau(s)))\big)ds\Big\|_\mathbb{X}\\
\leq&\int_{-\infty}^{t}Ne^{-\lambda(t-s)}\big(\mathfrak{L}_1\|x(s)-y(s)\|_\mathbb{X} +\mathfrak{L}_2\|x(s-\tau(s))-y(s-\tau(s))\|_\mathbb{X}\big)ds\\
\leq&\frac{N(\mathfrak{L}_1+\mathfrak{L}_2)}{\lambda}\|x-y\|_{\mathbb{W}}, \,\,t\in\mathbb{R},
\end{align*}
 which combined with $(H_4)$ yields
$$\|\Psi x-\Psi y\|_{\mathbb{W}}\leq \kappa\|x-y\|_{\mathbb{W}}.$$
 Hence, $\Psi$ is a contraction mapping from $\mathbb{Y}$ to $\mathbb{W}$. Noting the fact that $(\mathbb{W},\|\cdot\|_{\mathbb{W}})$ is a Banach space, therefore, according to the Banach fixed point theorem, $\Psi$ has a unique fixed point $z^{*}\in \mathbb{W}$ such that $Tz^{*}=z^{*}$. that is, system \eqref{eq1} has a unique Besicovitch almost periodic mild solution $z^{*}$. The proof is complete.
\end{proof}

Now, we give one example to show the effectiveness of the results  obtained in this section.

\begin{example}

\begin{equation}\label{ex1}\left\{\begin{array}{lll}
\frac{\partial}{\partial t}u(t,x)=\frac{\partial^2}{\partial x^2}u(t,x)+ f(t,u(t,x),u(t-\tau(t),x)),\,\, t\geq 0, x\in (0,\pi),\\
u(t,0)=u(t,\pi)=0,\,\, t\geq 0,\\
u(\theta,x)=\varphi(\theta,x), \,\, \theta\in [-\bar{\tau},0], \,\, x\in [0,\pi],
\end{array}\right.
\end{equation}
where $\varphi\in C([-\bar{\tau},0]\times [0,\pi]), \tau(t)=3-\sin(\sqrt{3}t)$ and
\begin{align*}
&f(t,u(t,x),u(t-\tau(t),x))\\
=&\frac{1}{60}\bigg(\cos t+2\cos\sqrt{5}t+4e^{-|t|}-\frac{3}{1+t^2}\bigg)(\sin u(t,x)+3\sin u(t-\tau(t),x)).
\end{align*}
Take $\mathbb{X}=L^2(0,\pi), (Au)x=u''(x)$ for $x\in [0,\pi]$ and $u\in D(A)=\{u\in C^1[0,\pi]: u'$ is absolutely continuous on $[0,\pi], u''\in \mathbb{X}, u(0)=u(\pi)=0\}$.
As we know,   $A$ generates a $C_0$ semigroup $T(t)$ with the property that $\|T(t)\|\leq e^{-t}$ for $t\geq 0$.

Obviously, $f\in B_{AP}^{\flat}(\mathbb{R},\mathbb{X}^2), \tau\in AP(\mathbb{R},\mathbb{R}^+)$, $N=\lambda=1$, $\mathcal{L}_1=\frac{1}{6}, \mathcal{L}_2=\frac{1}{2}$.
By a simple calculation, we have $\kappa=\frac{2}{3}$. So $(H_1)$-$(H_3)$ are satisfied. Thus, according to Theorem \ref{thm41},  system \eqref{ex1}  has a unique  Besicovitch almost periodic solution.
\end{example}

{}

\end{document}